\newtheorem{thm}{Theorem}[section]
\newtheorem{cor}[thm]{Corollary}
\newtheorem{prop}[thm]{Proposition}
\theoremstyle{definition}
\newtheorem{defn}[thm]{Definition}
\theoremstyle{definition}
\newtheorem{remark}[thm]{Remark}
\newcommand{\mc}[1]{\mathcal{#1}}
\newcommand{\e}[1]{\emph{#1}}
\newcommand{\la}{\langle}
\newcommand{\ra}{\rangle}
\newcommand{\rmv}[1]{}
\newcommand{\hs}{\hskip10pt}
\newcommand{\sq}{\square}
\newcommand{\LUC}{\mathrm{LUC}(\mathbb{G})}
\newcommand{\RUC}{\mathrm{RUC}(\mathbb{G})}
\newcommand{\Htm}{\mc{H}_{\Theta(m)}}
\newcommand{\LG}{\mc{L}(G)}
\newcommand{\RG}{\mc{R}(G)}
\newcommand{\LO}{L_1(G)}
\newcommand{\LOQ}{L_1(\mathbb{G})}
\newcommand{\LOQH}{L_1(\hat{\mathbb{G}})}
\newcommand{\LOQHP}{L_1(\hat{\mathbb{G}}')}
\newcommand{\LTQ}{L_2(\mathbb{G})}
\newcommand{\LI}{L_{\infty}(G)}
\newcommand{\LIQ}{L_{\infty}(\mathbb{G})}
\newcommand{\LIQH}{L_{\infty}(\hat{\mathbb{G}})}
\newcommand{\LIQHP}{L_{\infty}(\hat{\mathbb{G}}')}
\newcommand{\BH}{\mc{B}(H)}
\newcommand{\BLT}{\mc{B}(L_2(G))}
\newcommand{\BLTQ}{\mc{B}(L_2(\mathbb{G}))}
\newcommand{\TC}{\mc{T}(L_2(G))}
\newcommand{\TCQ}{\mc{T}(L_2(\mathbb{G}))}
\newcommand{\CBLTQ}{\mc{CB}_{L_{\infty}(\hat{\mathbb{G}})}^{L_{\infty}(\mathbb{G})}(\mc{B}(L_2(\mathbb{G})))}
\newcommand{\CBTCrr}{\mc{CB}_{\mc{T}_{\rhd}}(\mc{B}(L_2(\mathbb{G})))}
\newcommand{\CBTClr}{_{\mc{T}_{\rhd}}\mc{CB}(\mc{B}(L_2(\mathbb{G})))}
\newcommand{\CBTCll}{_{\mc{T}_{\lhd}}\mc{CB}(\mc{B}(L_2(\mathbb{G})))}
\newcommand{\CBTCrl}{\mc{CB}_{\mc{T}_{\lhd}}(\mc{B}(L_2(\mathbb{G})))}
\newcommand{\Mcb}{M_{cb}A(G)}
\newcommand{\vphi}{\varphi}
\newcommand{\lm}{\lambda}
\newcommand{\Gam}{\Gamma}
\newcommand{\om}{\omega}
\newcommand{\ten}{\otimes}
\newcommand{\oten}{\bar{\otimes}}
\newcommand{\hten}{\widehat{\otimes}}
\newcommand{\id}{\iota}
\newcommand{\h}[1]{\hat{#1}}
\providecommand{\norm}[1]{\lVert#1\rVert}
\newcommand{\G}{\mathbb{G}}
\newcommand{\C}{\mathbb{C}}
\newcommand{\R}{\mathbb{R}}
\begin{document}

\title{Amenability and covariant injectivity of locally compact quantum groups}
\author{Jason Crann$^{1,2}$ and Matthias Neufang$^{1,2}$}
\email{jason\_crann@carleton.ca, Matthias.Neufang@math.univ-lille1.fr}
\address{$^1$School of Mathematics \& Statistics, Carleton University, Ottawa, ON, Canada K1S 5B6}
\address{$^2$Universit\'{e} Lille 1 - Sciences et Technologies, UFR de Math\'{e}matiques, Laboratoire de Math\'{e}matiques Paul Painlev\'{e} - UMR CNRS 8524, 59655 Villeneuve d'Ascq C\'{e}dex, France}

\begin{spacing}{1.5}

\begin{abstract} As is well known, the equivalence between amenability of a locally compact group $G$ and injectivity of its von Neumann algebra $\LG$ does not hold in general beyond inner amenable groups. In this paper, we show that the equivalence persists for all locally compact groups if $\LG$ is considered as a $\TC$-module with respect to a natural action. In fact, we prove an appropriate version of this result for every locally compact quantum group.\end{abstract}

\maketitle

\section{Introduction}

%The connection between amenability and injectivity in harmonic and functional analysis has surfaced through a variety of ways. Most notably, in operator algebras, amenability of a $C^*$-algebra is equivalent to injectivity of its enveloping von Neumann algebra \cite{CE,C,EL,L}, and in abstract harmonic analysis, amenability of a locally compact group $G$ is related to injectivity of its group von Neumann algebra $\LG$; injectivity holds when $G$ is amenable, and the two notions are equivalent in the case of inner amenable groups $G$ \cite{LP}. For general locally compact groups, however, there is no clear connection. A result of Connes' \cite[Corollary 7]{C}, attributed to Dixmier, states that $\LG$ is injective for any separable connected locally compact group. Therefore, many non inner amenable groups carry injective von Neumann algebra structures, such as $SL_n(\R)$ for $n\geq2$. It is the intention of this paper to clarify this connection, even at the level of locally compact quantum groups, by providing new characterizations of amenability using the $\TCQ$-module structure on $\BLTQ$. We note that the use of an action by $\TCQ$, rather than one by $\LOQ$, is crucial.

The connection\let\thefootnote\relax\footnotetext{2010 \e{Mathematics Subject Classification:} Primary 22D15 46L89 81R15, Secondary 43A07 46M10 43A20.} between amenability of a locally compact group $G$ and injectivity of the von Neumann algebra $\LG$ associated with the left regular representation has been a topic of interest in abstract harmonic analysis for decades. Amenability of $G$ entails injectivity of $\LG$, however, the converse is not true, e.g., if $G=SL_n(\R)$ for $n\geq2$; indeed, a result of Connes' \cite[Corollary 7]{C}, attributed to Dixmier, states that $\LG$ is injective for any separable connected locally compact group. In order to find a strengthening of injectivity which would be equivalent to amenability, there have been two main approaches: in terms of additional properties of the underlying group, or of the associated conditional expectations. In the spirit of the first approach, Lau and Paterson showed that $G$ is amenable if and only if $\LG$ is injective and $G$ is inner amenable \cite[Corollary 3.2]{LP}. Following the second approach, So\l tan and Viselter recently proved, in the more general setting of locally compact quantum groups $\G$, that amenability is equivalent to the existence of a conditional expectation $E:\BLTQ\rightarrow\LIQH$ which maps $\LIQ$ into the center of $\LIQH$ \cite[Theorem 3]{SV}. In the present paper, we provide a new perspective on this connection, even at the level of locally compact quantum groups, by presenting new characterizations of amenability using the $\TCQ$-module structure of $\BLTQ$. We note that the use of an action by $\TCQ$, rather than one by $\LOQ$, is crucial.

We begin in section 2 by recalling the relevant definitions and results from the theory of locally compact quantum groups, as introduced by Kustermans and Vaes \cite{KV1,KV2,V}.

Section 3 is devoted to an overview of the $\TCQ$-bimodule structures on $\BLTQ$ and its relation to the spaces $\LUC$ and $\RUC$ of left and right uniformly continuous functionals on a locally compact quantum group $\G$, as introduced in \cite{HNR2,R}. For any locally compact quantum group $\G$, there are two canonical completely contractive Banach algebra structures on $\TCQ$, denoted by $(\TCQ,\lhd)$ and $(\TCQ,\rhd)$, induced by the left and right fundamental unitaries of $\G$, respectively. This in turn yields two interesting bimodule structures on $\BLTQ$, which have been a recent topic of interest in the development of harmonic analysis on locally compact quantum groups \cite{HNR1,HNR3}, and are closely related to $\LUC$ and $\RUC$.

The dual space of $\LUC$ carries a natural Banach algebra structure. In \cite{HNR1}, Hu, Neufang and Ruan studied various properties of this algebra, in particular through a weak*-weak* continuous, injective, completely contractive representation
\begin{equation*}\Theta^r:\LUC^*\rightarrow\CBTCrr\end{equation*}
in the algebra of completely bounded right $(\TCQ,\rhd)$-module maps on $\BLTQ$. This representation is the fundamental tool in our work, and is used in section 4 to show that a locally compact quantum group $\G$ is amenable if and only if the dual quantum group $\h{\G}$ is what we shall call \e{covariantly injective}, meaning the corresponding projection of norm one commutes with the module action of $(\TCQ,\rhd)$ on $\BLTQ$. As an application, we obtain a new proof of the recently answered question of B\'{e}dos and Tuset concerning the topological amenability of $\G$ \cite{Z}. By examining the remaining three $\TCQ$-module structures on $\BLTQ$, we obtain new characterizations of amenability, co-commutativity, as well as injectivity of $\h{\G}$. Moreover, compactness of $\G$ can be characterized in terms of normal conditional expectations respecting the $\TCQ$-module structure.

We finish in section 5 by showing that a locally compact quantum group $\G$ is amenable if and only if $\LIQH$ is an injective operator $\TCQ$-module. Even in the commutative case, this provides a new identification of classical amenability of a locally compact group $G$ in terms of the injectivity of $\LG$ as a $\TC$-module. We also show that both amenability of $\G$ \e{and} of $\h{\G}$ may be characterized through the injectivity of $\BLTQ$ as a left, respectively, right $\TCQ$-module. This, along with other results in the paper suggests that these homological methods may provide a new approach to the duality problem of amenability and co-amenability for arbitrary locally compact quantum groups.

\section{Preliminaries}

A \e{locally compact quantum group} is a quadruple $\G=(\LIQ,\Gam,\vphi,\psi)$, where $\LIQ$ is a Hopf-von Neumann algebra with a co-associative co-multiplication $\Gam:\LIQ\rightarrow\LIQ\oten\LIQ$, and $\vphi$ and $\psi$ are fixed (normal faithful semifinite) left and right Haar weights on $\LIQ$, respectively \cite{KV2,V}. For every locally compact quantum group $\G$, there exists a \e{left fundamental unitary operator} $W$ on $L_2(\G,\vphi)\ten L_2(\G,\vphi)$ and a \e{right fundamental unitary operator} $V$ on $L_2(\G,\psi)\ten L_2(\G,\psi)$ implementing the co-multiplication $\Gam$ via
\begin{equation*}\Gam(x)=W^*(1\ten x)W=V(x\ten 1)V^*\hs(x\in\LIQ).\end{equation*}
Both unitaries satisfy the \e{pentagonal relation}; that is,
\begin{equation}\label{penta}W_{12}W_{13}W_{23}=W_{23}W_{12}\hs\mathrm{and}\hs V_{12}V_{13}V_{23}=V_{23}V_{12}.\end{equation}
By \cite[Proposition 2.11]{KV2}, we may identify $L_2(\G,\vphi)$ and $L_2(\G,\psi)$, so we will simply use $\LTQ$ for this Hilbert space throughout the paper.

Let $\LOQ$ denote the predual of $\LIQ$. Then the pre-adjoint of $\Gam$ induces an associative completely contractive multiplication on $\LOQ$, defined by
\begin{equation*}\star:\LOQ\hten\LOQ\ni f\ten g\mapsto f\star g=\Gam_*(f\ten g)\in\LOQ.\end{equation*}
The multiplication $\star$ is a complete quotient map from $\LOQ\hten\LOQ$ onto $\LOQ$, implying
\begin{equation*}\la\LOQ\star\LOQ\ra=\LOQ,\end{equation*}
where $\la\LOQ\star\LOQ\ra$ denotes the closed linear span of $f\star g$, with $f,g\in\LOQ$. There is a canonical $\LOQ$-bimodule structure on $\LIQ$, defined by
\begin{equation*}\la f\star x,g\ra=\la x,g\star f\ra\hs\mathrm{and}\hs\la x\star f,g\ra=\la x,f\star g\ra,\end{equation*}
for $x\in\LIQ$, and $f,g\in\LOQ$. Using the co-multiplication $\Gam$ we may write
\begin{equation*}f\star x=(\id\ten f)\Gam(x)\hs\mathrm{and}\hs x\star f=(f\ten\id)\Gam(x)\hs(x\in\LIQ, f\in\LOQ).\end{equation*}
If $X$ is an operator system in $\LIQ$ that is also a left $\LOQ$-submodule, then a \e{left invariant mean on $X$}, is a state $m\in X^*$ satisfying
\begin{equation}\label{leftinv}\la m,f\star x\ra=\la f,1\ra\la m,x\ra\hs(x\in X, f\in\LOQ).\end{equation}
Right and two-sided invariant means are defined similarly. A locally compact quantum group $\G$ is said to be \e{amenable} if there exists a left invariant mean on $\LIQ$. It is known that $\G$ is amenable if and only if there exists a right (equivalently, two-sided) invariant mean (cf. \cite[Proposition 3]{DQV}). $\G$ is said to be \e{co-amenable} if $\LOQ$ has a bounded left (equivalently, right or two-sided) approximate identity (cf. \cite[Theorem 3.1]{BT}).

Given a locally compact quantum group $\G$, the \e{left regular representation} $\lm:\LOQ\rightarrow\BLTQ$ is defined by
\begin{equation*}\lm(f)=(f\ten\id)(W)\hs(f\in\LOQ),\end{equation*}
and is an injective, completely contractive homomorphism from $\LOQ$ into $\BLTQ$. Then $\LIQH:=\{\lm(f) : f\in\LOQ\}''$ is the von Neumann algebra associated with the dual quantum group $\h{\G}$ of $\G$. Analogously, we have the \e{right regular representation} $\rho:\LOQ\rightarrow\BLTQ$ defined by
\begin{equation*}\rho(f)=(\id\ten f)(V)\hs(f\in\LOQ),\end{equation*}
which is also an injective, completely contractive homomorphism from $\LOQ$ into $\BLTQ$. Then $\LIQHP:=\{\rho(f) : f\in\LOQ\}''$ is the von Neumann algebra associated to the quantum group $\h{\G}'$. It follows that $\LIQHP=\LIQH'$, and the fundamental unitaries satisfy $W\in\LIQ\oten\LIQH$ and $V\in\LIQHP\oten\LIQ$ \cite[Proposition 2.15]{KV2}. Moreover, dual quantum groups always satisfy $\LIQ\cap\LIQH=\LIQ\cap\LIQHP=\C1$ \cite{VD}.

If $G$ is a locally compact group, we let $\G_a=(\LI,\Gam_a,\vphi_a,\psi_a)$ denote the \e{commutative} quantum group associated with the commutative von Neumann algebra $\LI$, where the co-multiplication is given by $\Gam_a(f)(s,t)=f(st)$, and $\vphi_a$ and $\psi_a$ are integration with respect to a left and right Haar measure, respectively. The dual quantum group $\h{\G}_a$ of $\G_a$ is the \e{co-commutative} quantum group $\G_s=(\LG,\Gam_s,\vphi_s,\psi_s)$, where $\LG$ is the left group von Neumann algebra with co-multiplication $\Gam_s(\lm(t))=\lm(t)\ten\lm(t)$, and $\vphi_s=\psi_s$ is Haagerup's Plancherel weight (cf. \cite[\S VII.3]{T2}). Here, $\G_s$ is called co-commutative since its co-multiplication is symmetric. We also consider the quantum group $\h{\G}_a'=\G_s'$ associated to the right group von Neumann algebra $\RG$ with the co-multiplication $\Gam_s'(\rho(t))=\rho(t)\ten\rho(t)$. Then $L_1(\G_a)$ is the usual group convolution algebra $\LO$, and $L_1(\G_s)=L_1(\G_s')$ is the Fourier algebra $A(G)$. It is known that every commutative locally compact quantum group is of the form $\G_a$ \cite[Theorem 2; \S2]{T,VV}. Therefore, every commutative locally compact quantum group is co-amenable, and is amenable if and only if the underlying locally compact group is amenable. By duality, every co-commutative locally compact quantum group is of the form $\G_s$, which is always amenable \cite[Theorem 4]{Re}, and is co-amenable if and only if the underlying locally compact group is amenable, by Leptin's classical theorem.

By using the regular representations of the quantum groups $\h{\G}$ and $\h{\G}'$, we arrive at the \e{reduced quantum group $C^*$-algebra} of $\LIQ$, defined as
\begin{equation*}C_0(\G)=\overline{\h{\lm}(\LOQH)}^{\norm{\cdot}}=\overline{\h{\rho}(\LOQHP)}^{\norm{\cdot}}.\end{equation*}
$\G$ is said to be \e{compact} if $C_0(\G)$ is a unital $C^*$-algebra. For quantum groups arising from locally compact groups $G$, it follows that $C_0(\G_a)$ is $C_0(G)$, the algebra of continuous functions vanishing at infinity, and $C_0(\G_s)$ is the left group $C^*$-algebra $C^*_{\lm}(G)$. The multiplier algebra of $C_0(\G)$ will be denoted $M(C_0(\G))$.

\section{$\LUC$ and $\LUC^*$}

Let $\G$ be a locally compact quantum group. The right fundamental unitary $V$ of $\G$ induces a co-associative co-multiplication
\begin{equation*}\Gam^r:\BLTQ\ni x\mapsto V(x\ten 1)V^*\in\BLTQ\oten\BLTQ,\end{equation*}
and the restriction of $\Gam^r$ to $\LIQ$ yields the original co-multiplication $\Gam$ on $\LIQ$. The pre-adjoint of $\Gam^r$ induces an associative completely contractive multiplication on $\TCQ$, defined by
\begin{equation*}\rhd:\TCQ\hten\TCQ\ni\om\ten\tau\mapsto\om\rhd\tau=\Gam^r_*(\om\ten\tau)\in\TCQ,\end{equation*}
where $\hten$ denotes the operator space projective tensor product. Analogously, the left fundamental unitary $W$ of $\G$ induces a co-associative co-multiplication
\begin{equation*}\Gam^l:\BLTQ\ni x\mapsto W^*(1\ten x)W\in\BLTQ\oten\BLTQ,\end{equation*}
and the restriction of $\Gam^l$ to $\LIQ$ is also equal to $\Gam$. The pre-adjoint of $\Gam^l$ induces another associative completely contractive multiplication \begin{equation*}\lhd:\TCQ\hten\TCQ\ni\om\ten\tau\mapsto\om\lhd\tau=\Gam^l_*(\om\ten\tau)\in\TCQ.\end{equation*}
These two products on $\TCQ$ are quite different in general. It is known that $(\TCQ,\rhd)$ is always left faithful, and right faithful if and only if $\G$ is trivial. Similarly, $(\TCQ,\lhd)$ is always right faithful, and is left faithful if and only if $\G$ is trivial (cf. \cite{HNR1}).

For commutative and co-commutative quantum groups, this type of multiplicative structure on $\TCQ$ has been studied in \cite{A,N,NRS,NR,P}, and the general case has been investigated in \cite{HNR1,HNR3,KN}. In particular, it was shown in \cite[Lemma 5.2]{HNR1} that the pre-annihilator $\LIQ_{\perp}$ of $\LIQ$ in $\TCQ$ is a norm closed two sided ideal in $(\TCQ,\rhd)$ and $(\TCQ,\lhd)$, respectively, and the complete quotient map
\begin{equation}\label{pi}\pi:\TCQ\ni\om\mapsto f=\om|_{\LIQ}\in\LOQ\end{equation}
is a completely contractive algebra homomorphism from $(\TCQ,\rhd)$ and $(\TCQ,\lhd)$, respectively, onto $\LOQ$. Therefore, we have the completely isometric Banach algebra identifications
\begin{equation*}(\LOQ,\star)\cong(\TCQ,\rhd)/\LIQ_{\perp}\hs\mathrm{and}\hs(\LOQ,\star)\cong(\TCQ,\lhd)/\LIQ_{\perp}.\end{equation*}
This allows us to view each of $(\TCQ,\rhd)$ and $(\TCQ,\lhd)$ as a lifting of $(\LOQ,\star)$.

The multiplication $\rhd$ defines a completely contractive $(\TCQ,\rhd)$-bimodule structure on $\BLTQ$ via
\begin{align*}&\BLTQ\hten\TCQ\ni(x,\om)\mapsto x\rhd\om=(\om\ten\id)V(x\ten 1)V^*\in\LIQ\subseteq\BLTQ;\\
              &\TCQ\hten\BLTQ\ni(\om,x)\mapsto \om\rhd x=(\id\ten\om)V(x\ten1)V^*\in\BLTQ.\end{align*}
Note that since $V\in\LIQHP\oten\LIQ$, the bimodule action on $\LIQH$ becomes rather trivial. Indeed, for $\h{x}\in\LIQH$ and $\om\in\TCQ$
\begin{equation}\label{trac}\h{x}\rhd\om=(\om\ten\id)V(\h{x}\ten 1)V^*=\la\om,\h{x}\ra1\hs\mathrm{and}\hs\om\rhd\h{x}=(\id\ten\om)V(\h{x}\ten 1)V^*=\la\om,1\ra\h{x}.\end{equation}

\begin{remark} Observe that the left action of $\TCQ$ on $\LIQ$ satisfies $\om\rhd x=\pi(\om)\star x$, i.e., it is implemented by a left $\LOQ$ action. However, the homological properties of the resulting right action on $\LOQ$ are not equivalent to those corresponding to the canonical right action of $\LOQ$ on itself. For instance, $\LO$ is always right projective over itself for any locally compact group $G$, while it is projective as a right $\TC$-module if and only if $G$ is discrete. See \cite[Theorem 3.3.32]{D} and \cite[Theorem 3.4]{P} for details.\end{remark}

The multiplication $\lhd$ defines, analogously, a completely contractive $(\TCQ,\lhd)$-bimodule structure on $\BLTQ$ via
\begin{align*}&\TCQ\hten\BLTQ\ni(\om,x)\mapsto \om\lhd x=(\id\ten\om)W^*(1\ten x)W\in\LIQ\subseteq\BLTQ;\\
              &\BLTQ\hten\TCQ\ni(x,\om)\mapsto x\lhd\om=(\om\ten\id)W^*(1\ten x)W\in\BLTQ.\end{align*}
In particular, for $x\in\LIQ$ and $f=\om|_{\LIQ}$ with $\om\in\TCQ$, we have
\begin{equation}\label{mod1}x\rhd\om=x\lhd\om=(\om\ten\id)\Gam(x)=x\star f\hs\mathrm{and}\hs\om\lhd x=\om\rhd x=(\id\ten\om)\Gam(x)=f\star x.\end{equation}
As above, we see that the bimodule actions of $(\TCQ,\rhd)$ and $(\TCQ,\lhd)$ on $\BLTQ$ are liftings of the usual bimodule action of $\LOQ$ on $\LIQ$.

If $\G$ is a locally compact quantum group, the subspaces $\LUC$ and $\RUC$ of $\LIQ$ are defined by \cite{HNR2,R}
\begin{equation*}\LUC=\la\LIQ\star\LOQ\ra\hs\mathrm{and}\hs\RUC=\la\LOQ\star\LIQ\ra.\end{equation*}
It was shown by Runde in \cite[Theorem 2.4]{R} that $\LUC$ and $\RUC$ are operator systems in $\LIQ$ such that
\begin{equation}\label{MC0}\C_0(\G)\subseteq\LUC,\RUC\subseteq M(C_0(\G)).\end{equation}
In the classical setting of locally compact groups $G$, $\mathrm{LUC}(\G_a)$ (respectively, $\mathrm{RUC}(\G_a)$) is the usual space $\mathrm{LUC}(G)$ (respectively, $\mathrm{RUC}(G)$) of bounded left (respectively, right) uniformly continuous functions on $G$, and $\mathrm{LUC}(\G_s)=\mathrm{RUC}(\G_s)$ is the space $\mathrm{UCB}(\h{G})$ of uniformly continuous linear functionals on $A(G)$ introduced by Granirer \cite{G}. Using the extended module actions of $\TCQ$ on $\BLTQ$, it was shown in \cite[Proposition 5.3]{HNR1} that
\begin{align*}&\LUC=\la\LUC\star\LOQ\ra=\la\BLTQ\rhd\TCQ\ra;\\
              &\RUC=\la\LOQ\star\RUC\ra=\la\TCQ\lhd\BLTQ\ra.\end{align*}

%Letting $J$ denote the conjugate linear isomorphism of $\LTQ$ arising from the GNS construction of the left Haar weight $\vphi$, we obtain a *-anti-automorphism $\tilde{R}:\BLTQ\rightarrow\BLTQ$, mapping $\LIQ$ and $\LIQH$ onto $\LIQ$ and $\LIQHP$, respectively. The map $\tilde{R}$ is an extension of the unitary antipode $R$ of $\G$ and satisfies the generalized antipode relations; that is,
%\begin{equation}\label{R}(\tilde{R}\ten\tilde{R})\circ\Gam^r=\Sigma(\Gam^l\circ\tilde{R})\Sigma\hs\mathrm{and}\hs(\tilde{R}\ten\tilde{R})\circ\Gam^l=\Sigma(\Gam^r\circ\tilde{R})\Sigma,\end{equation}
%where $\Sigma$ is the flip map on $\LTQ\ten\LTQ$. Consequently, $\tilde{R}$ maps $\LUC$ onto $\RUC$. For details, see \cite[\S1.4,\S5]{V,HNR1}.

For every locally compact quantum group $\G$, we have the left and right Arens products $\sq$ and $\Diamond$ on $\LIQ^*=\LOQ^{**}$, which are defined by
\begin{equation*}\la m\sq n,x\ra=\la m,n\sq x\ra\hs\mathrm{and}\hs\la m\Diamond n,x\ra=\la n, x\Diamond m\ra\hs(m,n\in\LIQ^*, x\in\LIQ),\end{equation*}
where $n\sq x$ and $x\Diamond m$ are elements of $\LIQ$ given by
\begin{equation*}\la n\sq x,f\ra=\la n,x\star f\ra\hs\mathrm{and}\hs\la x\Diamond m,f\ra=\la m,f\star x\ra\hs(f\in\LOQ).\end{equation*}
Then $(\LIQ^*,\sq)$ and $(\LIQ^*,\Diamond)$ are completely contractive Banach algebras.

Given $m\in\LUC^*$, we define a bounded linear map $m_L$ on $\LIQ$ by
\begin{equation*}m_L:\LIQ\ni x\mapsto m\sq x\in\LIQ,\end{equation*}
where the product $m\sq x\in\LIQ$ is given as above, noticing that $\LIQ\star\LOQ\subseteq\LUC$. This map is completely bounded, with $\norm{m_L}_{cb}\leq\norm{m}$, and a right $\LOQ$-module map, since
\begin{equation*}\la m\sq(x\star f),g\ra=\la m,x\star(f\star g)\ra=\la m\sq x,f\star g\ra=\la(m\sq x)\star f,g\ra\hs(x\in\LIQ, f,g\in\LOQ).\end{equation*}
Therefore, $m_L$ maps $\LUC$ into $\LUC$, and so the left Arens product $\sq$ on $\LIQ^*$ induces a completely contractive multiplication on $\LUC^*$, also denoted $\sq$, so that the restriction
\begin{equation*}\LIQ^*\ni m\mapsto m|_{\LUC}\in\LUC^*\end{equation*}
is a completely contractive, multiplicative quotient map from $(\LIQ^*,\sq)$ onto $(\LUC^*,\sq)$.

Let $m\in\LUC^*$. Then, as $\LUC=\la\BLTQ\rhd\TCQ\ra$, the module map $m_L$ may be extended to a right $(\TCQ,\rhd)$-module map on $\BLTQ$ via
\begin{equation*}\la m_L(x),\om\ra=\la m,x\rhd\om\ra=\la m,(\om\ten\id)V(x\ten 1)V^*\ra\hs(x\in\BLTQ, \om\in\TCQ).\end{equation*}
In this case, we also have $\norm{m_L}_{cb}\leq\norm{m}$, and if we let $\CBTCrr$ denote the algebra of completely bounded right $(\TCQ,\rhd)$-module maps on $\BLTQ$, it follows that
\begin{equation}\label{theta}\Theta^r:\LUC^*\ni m\mapsto m_L\in\CBTCrr\end{equation}
is a weak*-weak* continuous, injective, completely contractive algebra homomorphism \cite[Proposition 6.5]{HNR1}. Moreover, \cite[Theorem 7.1]{HNR1} entails that
\begin{equation}\label{inter}\Theta^r(\LUC^*)\subseteq\CBTCrr\cap\CBLTQ,\end{equation}
where $\CBLTQ$ is the algebra of completely bounded $\LIQH$-bimodule maps on $\BLTQ$ that leave $\LIQ$ invariant. Analogously, the right Arens product $\Diamond$ induces a completely contractive Banach algebra structure on $\RUC^*$, and there exists a weak*-weak* continuous, injective, completely contractive anti-homomorphism
\begin{equation}\label{thetal}\Theta^l:\RUC^*\rightarrow~\CBTCll,\end{equation}
where $\CBTCll$ is the algebra of completely bounded left $(\TCQ,\lhd)$-module maps on $\BLTQ$.

\section{Covariant Injectivity}

In this section we introduce and study versions of injectivity of $\LIQH$ that capture fundamental properties of $\G$, such as amenability, compactness, and co-commutativity. The underlying idea is to refine injectivity through a covariance condition, by which we mean the existence of a conditional expectation respecting the natural $\TCQ$-module structure of $\BLTQ$ and $\LIQH$.

\begin{defn} For a locally compact quantum group $\G$, we say that a mapping $\Phi\in\mc{CB}(\BLTQ)$ is \e{covariant} if $\Phi(x\rhd\rho)=\Phi(x)\rhd\rho$ for all $x\in\BLTQ$ and $\rho\in\TCQ$.\end{defn}

We begin by using the representation (\ref{theta}) of $\LUC^*$ to establish a one-to-one correspondence between right invariant means on $\LUC$ and covariant conditional expectations onto $\LIQH$.

\begin{thm}\label{th} Let $\G$ be a locally compact quantum group. The following statements are equivalent:
\begin{enumerate}
\item $\G$ is amenable;
\item there is a right invariant mean on $\LUC$;
\item there is a covariant conditional expectation $E:\BLTQ\rightarrow\LIQH$.\end{enumerate}\end{thm}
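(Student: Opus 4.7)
The plan is to prove the cyclic chain $(1) \Rightarrow (2) \Rightarrow (3) \Rightarrow (1)$, with the representation $\Theta^r$ of (\ref{theta}) doing the essential work in $(2) \Rightarrow (3)$. For $(1) \Rightarrow (2)$, amenability of $\G$ provides a two-sided (and in particular right) invariant mean $n$ on $\LIQ$ by the equivalence recalled in Section~2; since $\LUC = \la \LIQ \star \LOQ\ra$ is a right $\LOQ$-submodule of $\LIQ$, the restriction $n|_{\LUC}$ is a right invariant mean on $\LUC$.

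For $(2) \Rightarrow (3)$, given a right invariant mean $m$ on $\LUC$, set $E := \Theta^r(m)$. Since $m$ is a state, $E$ is unital completely positive, and by construction it is a right $(\TCQ,\rhd)$-module map, so covariance $E(x \rhd \om) = E(x) \rhd \om$ holds automatically. A direct computation from right invariance yields $m \sq z = \la m, z\ra 1$ for every $z \in \LUC$, and in particular $m \sq m = m$; since $\Theta^r$ is a homomorphism, $E \circ E = E$. Formula (\ref{trac}) shows $E$ is the identity on $\LIQH$, so it remains to prove $E(\BLTQ) \subseteq \LIQH$. Fix $x \in \BLTQ$ and $\om \in \TCQ$. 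By $\LUC = \la \BLTQ \rhd \TCQ\ra$ we have $x \rhd \om \in \LUC$; applying $E$ and using $m \sq z = \la m, z\ra 1$ together with $\la m, x \rhd \om\ra = \la E(x), \om\ra$ gives $E(x \rhd \om) = \la E(x), \om\ra 1$. Combined with covariance this forces $(\om \ten \id)\Gam^r(E(x)) = \la \om, E(x)\ra 1 = (\om \ten \id)(E(x) \ten 1)$ for every $\om$, and hence $\Gam^r(E(x)) = E(x) \ten 1$, i.e., $V(E(x) \ten 1) = (E(x) \ten 1) V$. Slicing on the second leg by normal functionals and using $V \in \LIQHP \oten \LIQ$ with $(\id \ten f)(V) = \rho(f)$, we deduce that $E(x)$ commutes with every $\rho(f)$; since $\{\rho(f) : f \in \LOQ\}$ generates $\LIQHP$ weakly, $E(x) \in \LIQHP' = \LIQH$.

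For $(3) \Rightarrow (1)$, let $E$ be a covariant conditional expectation onto $\LIQH$. For $x \in \LIQ$, $\om \in \TCQ$, and $f := \pi(\om)$, the identity $x \rhd \om = x \star f$ together with covariance and (\ref{trac}) applied to $E(x) \in \LIQH$ yields
\begin{equation*} E(x \star f) = E(x \rhd \om) = E(x) \rhd \om = \la \om, E(x)\ra 1. \end{equation*}
The left-hand side depends only on $f$, so $\la \om, E(x)\ra$ must be independent of the choice of extension $\om$ of $f$; this forces $E(x)$ to annihilate $\LIQ_\perp$, and therefore $E(x) \in \LIQ \cap \LIQH = \C 1$. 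Defining $m \in \LIQ^*$ by $E(x) = m(x) \cdot 1$ produces a state satisfying $m(x \star f) = m(x) \la f, 1\ra$, a right invariant mean on $\LIQ$, so $\G$ is amenable. The only delicate step throughout is the image-in-$\LIQH$ part of $(2) \Rightarrow (3)$: unital complete positivity, idempotency, and covariance come formally from the machinery of $\Theta^r$, but translating the covariance equation $\Gam^r(E(x)) = E(x) \ten 1$ into membership in $\LIQHP'$ requires the precise leg structure of $V$ and the weak*-density of $\rho(\LOQ)$ in $\LIQHP$.
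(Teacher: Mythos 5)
Your proof is correct and follows the paper's argument in all essentials: $(1)\Rightarrow(2)$ by restriction, $(2)\Rightarrow(3)$ by showing that $E:=\Theta^r(m)$ is an idempotent norm-one covariant map which fixes $\LIQH$ and whose range is forced into $\rho(\LOQ)'=\LIQH$ via the relation $\Gam^r(E(x))=E(x)\ten 1$ and slicing of $V$, and $(3)\Rightarrow(1)$ by reading off a right invariant mean from $E|_{\LIQ}$. The only point where you genuinely diverge is in $(3)\Rightarrow(1)$: the paper first identifies $E$ with $\Theta^r(n)$ for $n:=E|_{\LUC}$ (using $E(\LUC)\subseteq\LUC\cap\LIQH=\C1$) and then invokes the inclusion (\ref{inter}), i.e.\ the module-invariance theorem of Hu--Neufang--Ruan, to conclude $E(\LIQ)\subseteq\LIQ$; you instead get $E(\LIQ)\subseteq\LIQ$ directly from covariance together with (\ref{trac}), observing that for $x\in\LIQ$ the element $x\rhd\om$ depends only on $\pi(\om)$, so the identity $E(x\rhd\om)=\la\om,E(x)\ra 1$ forces $E(x)$ to annihilate $\LIQ_\perp$. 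That is a small but genuine simplification: it makes the implication self-contained and avoids citing the external result, at the cost of not exhibiting $E$ as an element of $\Theta^r(\LUC^*)$ (which the theorem does not require). All the delicate steps you flag — in particular passing from $V(E(x)\ten 1)=(E(x)\ten 1)V$ to $E(x)\in\LIQHP'=\LIQH$ — are handled exactly as in the paper.
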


\begin{proof} $(1)\Rightarrow(2)$ Restriction of a right invariant mean on $\LIQ$ yields (2).\\
$(2)\Rightarrow(3)$ Let $m\in\LUC^*$ be a right invariant mean. Then $m\sq y=\la m,y\ra1$ for all $y\in\LUC$ by right invariance, which gives
\begin{equation*}\la m\sq m,y\ra=\la m,m\sq y\ra=\la m,y\ra\la m,1\ra=\la m,y\ra.\end{equation*}
Hence, $m$ is a norm one idempotent in $\LUC^*$, making $\Theta^r(m)$ a projection of norm one in $\CBTCrr$. As such, its image is equal to its fixed points, denoted $\Htm$. First observe that $\LIQH\subseteq\Htm$ as $\Theta^r(m)(\h{x})=(\id\ten m)V(\h{x}\ten1)V^*=\h{x}$. On the other hand, as $\Theta^r(m)$ is a $\TCQ$-module map, its fixed points form a $\TCQ$-submodule of $\BLTQ$. Thus, $x\rhd\om\in\Htm$ for every $x\in\Htm$ and $\om\in\TCQ$. But if $y\in\Htm\cap\LUC$, then $y=\Theta^r(m)(y)=m\sq y=\la m,y\ra1$. Hence, if $x\in\Htm$ and $\om\in\TCQ$ then $x\rhd\om=\la m,x\rhd\om\ra1$, so that for any $\tau\in\TCQ$
\begin{align*}\la\Gam(x),\om\ten\tau\ra&=\la x,\om\rhd\tau\ra=\la x\rhd\om,\tau\ra=\la m,x\rhd\om\ra\la1,\tau\ra\\
                                       &=\la\Theta^r(m)(x),\om\ra\la 1,\tau\ra=\la x\ten 1,\om\ten\tau\ra.\end{align*}
As $\om,\tau\in\TCQ$ were arbitrary, it follows that $\Gam(x)=V(x\ten1)V^*=x\ten1$, that is, $V(x\ten1)=(x\ten1)V$. Applying the slice map $(\id\ten f)$ to both sides of this equation yields $\rho(f)x=x\rho(f)$, for all $f\in\LOQ$. Therefore $x\in\rho(\LOQ)'=\LIQH$, making $E:=\Theta^r(m)$ the required projection.\\
$(3)\Rightarrow(1)$ If $E:\BLTQ\rightarrow\LIQH$ is a conditional expectation in $\CBTCrr$, then $E(\LUC)\subseteq\LUC\cap\LIQH=\C1$. Thus, by restriction we obtain a bounded linear functional $n\in\LUC^*$ satisfying $\la n,y\ra1=E(y)$ for all $y\in\LUC$. Moreover, considering the associated map $\Theta^r(n)\in\CBTCrr$, we see that
\begin{equation*}\la E(x),\om\ra=E(x)\rhd\om=E(x\rhd\om)=\la n,x\rhd\om\ra=\la\Theta^r(n)(x),\om\ra\end{equation*}
for all $x\in\BLTQ$ and $\om\in\TCQ$. This ensures that $E=\Theta^r(n)$, so in particular we have $E(\LIQ)\subseteq\LIQ\cap\LIQH=\C1$ by (\ref{inter}). Put $m:=E|_{\LIQ}$. Then $m\in\LIQ^*$ is a state satisfying
\begin{equation*}\la m,x\star f\ra=E(x\star f)=E(x)\star f=\la m,x\ra\la 1,f\ra\end{equation*}
for every $x\in\LIQ$ and $f\in\LOQ$. Hence, $m$ is a right invariant mean on $\LIQ$.\end{proof}

\begin{cor} A locally compact group $G$ is amenable if and only if there is a covariant conditional expectation $E:\BLT\rightarrow\LG$.\end{cor}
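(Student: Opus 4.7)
The plan is to derive this corollary as a direct specialization of \thmref{th} to the commutative locally compact quantum group $\G_a=(\LI,\Gam_a,\vphi_a,\psi_a)$ associated with $G$. First I would recall, as noted in section 2, that $\G_a$ is amenable precisely when the underlying locally compact group $G$ is amenable in the classical sense, and that the dual $\h{\G}_a=\G_s$ has $\LIQH=\LG$. The Hilbert space $L_2(\G_a)$ is just $\LT$, so $\BLTQ=\BLT$ and $\TCQ=\TC$ in this setting.

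With these identifications in place, \thmref{th} applied to $\G_a$ yields the equivalence of amenability of $G$ with the existence of a covariant conditional expectation $E:\BLT\rightarrow\LG$, where ``covariant'' is understood with respect to the $\TC$-module action $x\rhd\om=(\om\ten\id)V(x\ten 1)V^*$ induced by the right fundamental unitary $V$ of $\G_a$. Here $V$ is the unitary on $\LT\ten\LT$ given by $V\xi(s,t)=\xi(ts,t)$ (up to modular conventions), so the $\rhd$-action is the canonical $\TC$-module structure on $\BLT$ alluded to in the introduction.

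The argument is therefore essentially a translation of notation: $(1)\Leftrightarrow(3)$ of \thmref{th} specializes word for word. There is no substantial obstacle; the only point worth being explicit about is that the notion of amenability of $\G_a$, defined via the existence of a left invariant mean on $\LI$ in the quantum group sense, agrees with the classical notion of amenability of $G$, which is standard and was already invoked in section 2. Hence the corollary follows immediately.
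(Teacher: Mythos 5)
Your proposal is correct and is exactly the argument the paper intends: the corollary is stated without proof precisely because it is the immediate specialization of Theorem \ref{th} (the equivalence $(1)\Leftrightarrow(3)$) to the commutative quantum group $\G_a$, using the identifications $\LIQH=\LG$, $L_2(\G_a)=\LT$, and the agreement of quantum-group amenability of $\G_a$ with classical amenability of $G$, all of which you correctly invoke.
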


%\begin{remark} For amenable locally compact groups $G$, one may ask if every conditional expectation $E:\BLT\rightarrow\LG$ is covariant. In fact, this is not the case: there are examples of discrete amenable groups $G$ and conditional expectations $E:\BLT\rightarrow\LG$ which are not covariant. Their construction relies on the concept of extreme amenability -- a phenomenon only occurring in the realm of non locally compact groups -- and will be discussed in subsequent work \cite{CN}.\end{remark}

\begin{remark} In \cite{BT}, a notion of \e{topological amenability} for locally compact quantum groups $\G$ was defined by the existence of an invariant mean on $M(C_0(\G))$. The authors then asked if this notion of amenability is equivalent to the original one. The answer was recently provided, in the affirmative, by Zobeidi \cite{Z}, generalizing the partial result of Runde in the co-amenable setting \cite[Theorem 3.6]{R}. As we always have $\LUC\subseteq M(C_0(\G))$, Theorem \ref{th} provides an alternative proof (which had been found independently from \cite{Z}) for arbitrary locally compact quantum groups.\end{remark}

There is a corresponding result involving left invariant means on $\RUC$ and conditional expectations in $\CBTCll$. We state the result for completeness and for later use, but omit the details of the proof as the argument can easily be adapted from above using the left representation (\ref{thetal}).

\begin{thm}\label{th2} Let $\G$ be a locally compact quantum group. The following statements are equivalent:
\begin{enumerate}
\item $\G$ is amenable;
\item there is a left invariant mean on $\RUC$;
\item there is a conditional expectation $E:\BLTQ\rightarrow\LIQHP$ in $\CBTCll$.\end{enumerate}\end{thm}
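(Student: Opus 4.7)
The plan is to mirror the proof of Theorem~\ref{th} step by step, interchanging the roles of $V$ and $W$, of $\rhd$ and $\lhd$, of $\LUC$ and $\RUC$, of $\LIQH$ and $\LIQHP$, and of the Arens products $\sq$ and $\Diamond$. Before starting, I would assemble the symmetric counterparts of the tools used earlier: (a) $\RUC=\la\TCQ\lhd\BLTQ\ra$ with $\TCQ\lhd\BLTQ\subseteq\RUC$; (b) the identities $\om\lhd y=\la\om,y\ra1$ and $y\lhd\om=\la\om,1\ra y$ for every $y\in\LIQHP$, obtained from $W\in\LIQ\oten\LIQH$ together with the commutation of $1\ten\LIQHP$ with $W$; and (c) the descent of the right Arens product to a completely contractive algebra structure on $\RUC^*$, the representation $\Theta^l$ being given by $\la\Theta^l(m)(x),\om\ra=\la m,\om\lhd x\ra$, and the inclusion $\Theta^l(\RUC^*)\subseteq\CBLTQ$ (the left analogue of (\ref{inter})). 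Each of these is obtained from \cite{HNR1,HNR2,R} by a symmetric argument to what already appears in the $\LUC$ setting.

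With (a)--(c) in hand, $(1)\Rightarrow(2)$ is immediate by restricting a left invariant mean on $\LIQ$ to $\RUC$, using that $\LOQ\star\RUC\subseteq\RUC$. For $(2)\Rightarrow(3)$, starting from a left invariant mean $m\in\RUC^*$, I would first translate left invariance through the Arens identity $\la x\Diamond m,f\ra=\la m,f\star x\ra$ to obtain $x\Diamond m=\la m,x\ra1$ for $x\in\RUC$, which makes $m$ a norm-one idempotent in $(\RUC^*,\Diamond)$. Then $E:=\Theta^l(m)\in\CBTCll$ is a norm-one projection and its image coincides with its fixed-point set $\mc{H}$. The inclusion $\LIQHP\subseteq\mc{H}$ is immediate from (b). For the converse, (\ref{mod1}) combined with left invariance forces $\Theta^l(m)(y)=\la m,y\ra1$ for all $y\in\RUC$, so given $x\in\mc{H}$ and $\om\in\TCQ$, the element $\om\lhd x$ lies in $\RUC$ by (a) and is therefore a scalar multiple of $1$, in fact $\om\lhd x=\la x,\om\ra1$. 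Rewriting as $W^*(1\ten x)W=1\ten x$ and slicing in the first leg by $f\in\LOQ$ yields $\lm(f)x=x\lm(f)$, placing $x\in\lm(\LOQ)'=\LIQHP$.

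For $(3)\Rightarrow(1)$, given a conditional expectation $E\in\CBTCll$ onto $\LIQHP$, the module property together with (a) gives $E(\RUC)\subseteq\RUC\cap\LIQHP=\C1$, defining $n\in\RUC^*$ through $E(y)=\la n,y\ra1$. A computation parallel to the one in Theorem~\ref{th} identifies $E$ with $\Theta^l(n)$, so by (c) $E$ leaves $\LIQ$ invariant and $E(\LIQ)\subseteq\LIQ\cap\LIQHP=\C1$. Setting $m:=E|_{\LIQ}$, applying $E$ to $f\star x=\om\lhd x$ (with $\pi(\om)=f$) and invoking the module property together with $\om\lhd1=\la f,1\ra1$ yields $\la m,f\star x\ra1=\om\lhd E(x)=\la f,1\ra\la m,x\ra1$, giving the required left invariance of $m$ on $\LIQ$.

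The only delicate step is item (c), which requires carefully transposing the discussion surrounding $\Theta^r$ to the right-Arens side; the remainder of the argument is essentially symmetric bookkeeping, replacing $V$ by $W$ throughout the proof of Theorem~\ref{th}.
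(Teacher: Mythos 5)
Your proposal is correct and is exactly the approach the paper intends: the paper explicitly omits the proof of this theorem, stating that the argument of Theorem~\ref{th} "can easily be adapted from above using the left representation (\ref{thetal})," and your write-up carries out precisely that adaptation (with the correct left analogues of (\ref{trac}), (\ref{mod1}) and (\ref{inter}), the identification $\lm(\LOQ)'=\LIQHP$, and the idempotency of a left invariant mean under $\Diamond$). No discrepancies to report.
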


As $\G$ is compact if and only if it admits a left invariant mean in $\LOQ$ \cite[Proposition 3.1]{BT}, and the maps $\Theta^r(f),\Theta^l(f)\in\mc{CB}(\BLTQ)$ are normal for all $f\in\LOQ$ \cite[\S4]{JNR}, the following corollary is immediate.

\begin{cor} Let $\G$ be a locally compact quantum group. The following statements are equivalent:
\begin{enumerate}
\item $\G$ is compact;
\item there is a normal covariant conditional expectation $E:\BLTQ\rightarrow\LIQH$;
\item there is a normal conditional expectation $E:\BLTQ\rightarrow\LIQHP$ in $\CBTCll$.\end{enumerate}\end{cor}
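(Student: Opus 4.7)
The plan is to apply Theorem \ref{th} and Theorem \ref{th2} almost verbatim, with the only additional task being to track the normality condition on the conditional expectation against the normality of the associated invariant mean. The key external inputs are: (i) the characterization of compactness of $\G$ by the existence of an invariant mean inside $\LOQ$ (i.e.\ a normal invariant state), due to B\'edos--Tuset, and (ii) the fact that $\Theta^r(f)$ and $\Theta^l(f)$ are normal maps on $\BLTQ$ whenever $f\in\LOQ$, proved in [JNR, \S4]. With these in hand, the corollary is a by-product of the proofs already given, not of new machinery.

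For $(1)\Rightarrow(2)$, suppose $\G$ is compact and let $h\in\LOQ$ be the Haar state, which is a normal, two-sided invariant state. Viewing $h$ as an element of $\LUC^*$, the implication $(2)\Rightarrow(3)$ of Theorem \ref{th} produces the covariant conditional expectation $E:=\Theta^r(h):\BLTQ\to\LIQH$. Since $h\in\LOQ$, the normality of $\Theta^r(h)$ is exactly the content of [JNR, \S4], so $E$ is normal. The implication $(1)\Rightarrow(3)$ is entirely parallel, using the same $h$ (which is also left invariant) together with Theorem \ref{th2} and the normality of $\Theta^l(h)$.

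For the converses $(2)\Rightarrow(1)$ and $(3)\Rightarrow(1)$, we follow the proof of $(3)\Rightarrow(1)$ in Theorem \ref{th}, respectively Theorem \ref{th2}. Given a normal covariant conditional expectation $E:\BLTQ\to\LIQH$, that proof shows $E(\LIQ)\subseteq\LIQ\cap\LIQH=\C1$, and that $m:=E|_{\LIQ}\in\LIQ^*$ is a right invariant mean. Because $E$ is normal, so is $m$, hence $m\in\LOQ$. Thus $\G$ admits a normal right invariant mean, which, by the unitary antipode interchanging left and right Haar weights (equivalently, by [BT, Proposition 3.1] applied in its right-invariant form), is equivalent to compactness of $\G$. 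The argument for $(3)\Rightarrow(1)$ is identical after replacing $\Theta^r$ by $\Theta^l$ and $\LIQH$ by $\LIQHP$.

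I do not expect any genuine obstacle: the heavy lifting is entirely inside Theorems \ref{th} and \ref{th2}. The only point that requires care is ensuring that the normality hypothesis on $E$ is used exactly once in each converse (to deduce that the restriction $E|_{\LIQ}$ is a normal functional), and that the normality of $\Theta^r(h)$ (resp.\ $\Theta^l(h)$) in the forward direction is indeed available from [JNR, \S4]; both are routine.
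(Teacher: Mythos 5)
Your proposal is correct and follows essentially the same route as the paper, which derives the corollary directly from Theorems \ref{th} and \ref{th2} together with the two facts you cite: compactness of $\G$ is equivalent to the existence of a (normal) invariant mean in $\LOQ$ \cite[Proposition 3.1]{BT}, and $\Theta^r(f)$, $\Theta^l(f)$ are normal for $f\in\LOQ$ by \cite[\S4]{JNR}. Your additional care about left versus right invariance (handled via the unitary antipode) and about normality of the restricted mean is exactly the bookkeeping the paper leaves implicit.
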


In Theorem \ref{th}, we characterized the amenability of $\G$ by means of a conditional expectation $E:\BLTQ\rightarrow\LIQH$ commuting with the right $(\TCQ,\rhd)$-module action on $\BLTQ$. As there are three other $\TCQ$-module structures on $\BLTQ$, a natural problem is to study the existence of module projections $E:\BLTQ\rightarrow\LIQH$ in each of the remaining cases. To this end, we denote by $\CBTClr$ (respectively, $\CBTCrl$) the algebra of completely bounded left $(\TCQ,\rhd)$-module (respectively, right $(\TCQ,\lhd)$-module) maps on $\BLTQ$, and for any subset $\mc{S}$ of $\mc{CB}(\BLTQ)$, we denote its commutant in $\mc{CB}(\BLTQ)$ by $\mc{S}^c$.

\begin{thm}\label{th3} Let $\G$ be a locally compact quantum group. There exists a conditional expectation $E:\BLTQ\rightarrow\LIQH$ in $\CBTClr$ if and only if $\G$ is amenable.\end{thm}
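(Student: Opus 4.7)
The plan is to prove each implication using the representation $\Theta^r$ from (\ref{theta}), with the crux being the observation that a two-sided invariant mean $p$ on $\LUC$ produces a single map $\Theta^r(p)$ that simultaneously respects both the right and the left $(\TCQ,\rhd)$-module actions on $\BLTQ$.

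For $(3) \Rightarrow (1)$, I would combine the left-module identity $E(\om \rhd x) = \om \rhd E(x)$ with (\ref{trac}) (since $E(x) \in \LIQH$) and (\ref{mod1}) (for $x \in \LIQ$) to obtain $E(f \star x) = \la f, 1\ra E(x)$ for all $x \in \LIQ$ and $f \in \LOQ$. Composing with any vector state $\psi$ on $\LIQH$ then gives a state $m := \psi \circ E|_{\LIQ}$ on $\LIQ$ satisfying $\la m, f \star x\ra = \la f, 1\ra \la m, x\ra$, i.e., a left invariant mean, so $\G$ is amenable.

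For $(1) \Rightarrow (3)$, amenability provides a two-sided invariant mean on $\LIQ$, whose restriction $p$ to $\LUC$ remains two-sided invariant (since $\LOQ \star \LUC \subseteq \LIQ \star \LOQ \subseteq \LUC$ makes $\LUC$ a left $\LOQ$-submodule, and it is a right submodule by its very definition). Right invariance of $p$, combined with the argument in \thmref{th}, makes $\Theta^r(p) \colon \BLTQ \to \LIQH$ a conditional expectation in $\CBTCrr$, so the task reduces to showing $\Theta^r(p) \in \CBTClr$.

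To carry out this final step, I would invoke the bimodule associativity $(\om \rhd x) \rhd \tau = \om \rhd (x \rhd \tau)$ for $\om, \tau \in \TCQ$ and $x \in \BLTQ$. Since $x \rhd \tau \in \LUC \subseteq \LIQ$, equation (\ref{mod1}) rewrites the right-hand side as $\pi(\om) \star (x \rhd \tau)$. Left invariance of $p$ on $\LUC$ then collapses $\la p, (\om \rhd x) \rhd \tau\ra$ to $\la \om, 1\ra \la \Theta^r(p)(x), \tau\ra$, which equals $\la \om \rhd \Theta^r(p)(x), \tau\ra$ by (\ref{trac}) since $\Theta^r(p)(x) \in \LIQH$. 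Varying $\tau \in \TCQ$ yields the required left-module property. The main obstacle is precisely this bimodule-associativity step, and the insight that two-sided invariance of a single mean $p$ delivers both module-map properties, avoiding the need for a fresh representation analogous to $\Theta^r$ aimed at $\CBTClr$.
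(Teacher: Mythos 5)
Your proposal is correct and takes essentially the same approach as the paper: both directions rest on restricting a two-sided invariant mean to $\LUC$, invoking \thmref{th} for the conditional expectation, and using (\ref{trac}) to deduce left invariance of $\h{f}\circ E$ in the converse. Your direct verification of the left $(\TCQ,\rhd)$-module property via bimodule associativity is just an unwound form of the paper's observation that $\rho\rhd x=\Theta^r(\pi(\rho))(x)$, so that left module maps form the commutant of $\Theta^r(\LOQ)$, which contains $\Theta^r(m)$ by two-sided invariance and the homomorphism property of $\Theta^r$.
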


\begin{proof} By restriction, we may view any $f\in\LOQ\subseteq\LIQ^*$ as an element of $\LUC^*$. Moreover, if $\pi:(\TCQ,\rhd)\rightarrow(\LOQ,\star)$ denotes the restriction map (\ref{pi}), for $\om,\rho\in\TCQ$ and $x\in\BLTQ$ we have
\begin{equation*}\la\rho\rhd x,\om\ra=\la x,\om\rhd\rho\ra=\la x\rhd\om,\rho\ra=\la x\rhd\om,\pi(\rho)\ra=\la\Theta^r(\pi(\rho))(x),\om\ra.\end{equation*}
Thus, $\rho\rhd x=\Theta^r(\pi(\rho))(x)$, so that a map $\Phi\in\mc{CB}(\BLTQ)$ is a left $(\TCQ,\rhd)$-module homomorphism if and only if $\Phi\in\Theta^r(\LOQ)^c$.

If $\G$ is amenable, then there exists a two-sided invariant mean $m$ on $\LIQ$. Denoting again by $m$ its restriction to $\LUC$, it follows that
\begin{equation}\label{twosided}m\sq f=f\sq m=\la f,1\ra m\end{equation}
for every $f\in\LOQ$. Hence, $\Theta^r(m)\in\Theta^r(\LOQ)^c$ by (\ref{theta}). As $m$ is also a right invariant mean on $\LUC$, it follows from the proof of Theorem \ref{th} that $\Theta^r(m)$ is a conditional expectation onto $\LIQH$.

Conversely, suppose that there exists a conditional expectation $E:\BLTQ\rightarrow\LIQH$ in $\CBTClr$, and let $\h{f}\in\LOQH$ be a state. For $\om\in\TCQ$ with $f=\pi(\om)\in\LOQ$ and $x\in\LIQ$, the relations (\ref{trac}) imply
\begin{equation*}\la\h{f}\circ E, f\star x\ra=\la\h{f}\circ E,\om\rhd x\ra=\la\h{f},\om\rhd E(x)\ra=\la\om,1\ra\la\h{f}\circ E,x\ra=\la f,1\ra\la\h{f}\circ E,x\ra.\end{equation*}
Thus, $\h{f}\circ E$ is a left invariant mean on $\LIQ$.\end{proof}

%\begin{remark} After the above theorem was proven, it came to the authors' attention that So\l tan and Viselter had independently obtained a related result in \cite{SV}; the main result of \cite{SV} also characterizes amenability of $\G$ in terms of the existence of certain conditional expectations (namely those mapping $\LIQ$ into the center of $\LIQH$), however, the authors do not adopt a homological viewpoint.\end{remark}

\begin{remark} We note that the existence of a conditional expectation with the module property as in Theorem \ref{th3} is equivalent to the amenability of the right fundamental unitary $V$, as defined by B\'{e}dos and Tuset in \cite[\S4]{BT}.\end{remark}

\begin{thm}\label{automatic} Let $\G$ be a locally compact quantum group. There exists a conditional expectation $E:\BLTQ\rightarrow\LIQH$ in $\CBTCrl$ if and only if $\LIQH$ is injective.\end{thm}

\begin{proof} Suppose that $\h{\G}$ is injective. Then there exists a conditional expectation $E:\BLTQ\rightarrow\LIQH$. By \cite{To}, $E$ is an $\LIQH$-bimodule map on $\BLTQ$. We will show that it also lies in $\CBTCrl$. To this end, observe that a map $\Phi\in\mc{CB}(\BLTQ)$ is a right $(\TCQ,\lhd)$-module map if and only if $\Phi\in\Theta^l(\LOQ)^c$ (see the proof of Theorem \ref{th3}). If $f\in\LOQ$, then from \cite[Theorem 4.10]{JNR}, $\Theta^l(f)$ is a normal completely bounded $\LIQHP$-bimodule map on $\BLTQ$, which by an unpublished result of Haagerup \cite{Ha} implies the existence of two nets $(\h{a}_i)_{i\in I}$ and $(\h{b}_i)_{i\in I}$ in $\LIQH$ such that
\begin{equation*}\Theta^l(f)(x)=\sum_{i\in I}\h{a}_ix\h{b}_i,\end{equation*}
where the sum converges in the weak* topology of $\BLTQ$ for all $x\in\BLTQ$. Now, it follows from \cite[Lemma 2.3]{MNW} that we may approximate $E$ in the weak* topology of $\mc{CB}(\BLTQ)$ by a net of normal completely bounded $\LIQH$-bimodule maps $(\Phi_j)_{j\in J}$. Consequently, for $x\in\BLTQ$ and $\rho\in\TCQ$,
\begin{align*}\la E(\Theta^l(f)(x)),\rho\ra&=\lim_{j\in J}\la\Phi_j(\Theta^l(f)(x)),\rho\ra=\lim_{j\in J}\sum_{i\in I}\la\Phi_j(\h{a}_ix\h{b}_i),\rho\ra\\
                                           &=\lim_{j\in J}\sum_{i\in I}\la\h{a}_i\Phi_j(x)\h{b}_i,\rho\ra=\lim_{j\in J}\la\Theta^l(f)(\Phi_j(x)),\rho\ra\\
                                           &=\la\Theta^l(f)(E(x)),\rho\ra.\end{align*}
Since $f\in\LOQ$ was arbitrary, we have $E\in\Theta^l(\LOQ)^c$. As the converse is trivial, we are done.\end{proof}

\begin{cor}\label{corr} Let $\G$ be a locally compact quantum group for which there exists a state $m\in\BLTQ^*$ satisfying $m(\rho\rhd x)=m(x\lhd\rho)$ for all $x\in\BLTQ$ and $\rho\in\TCQ$. Then $\G$ is amenable if and only if $\LIQH$ is injective.\end{cor}

\begin{proof} Suppose $E:\BLTQ\rightarrow\LIQH$ is a conditional expectation. Then by Theorem \ref{automatic}, $E$ is a right $(\TCQ,\lhd)$-module map. Thus, $n:=m\circ E$ is a state on $\BLTQ$ satisfying $n(x\lhd\rho)=m(E(x)\lhd\rho)=m(\rho\rhd E(x))=\la\rho,1\ra n(x)$ for all $x\in\BLTQ$ and $\rho\in\TCQ$. It follows that $n|_{\LIQ}$ is a right invariant mean on $\LIQ$.\end{proof}

\begin{remark} The above condition, i.e., the existence of a state $m\in\BLTQ^*$ such that $m(\rho\rhd x)=m(x\lhd\rho)$ for all $x\in\BLTQ$ and $\rho\in\TCQ$, may be seen as a form of inner amenability for locally compact quantum groups. Indeed, a commutative quantum group $\G_a$ satisfies this property if and only if its underlying group $G$ is inner amenable \cite{CN}, i.e., there exists a state $n\in\LI^*$ satisfying $n(\delta_s\ast f\ast\delta_{s^{-1}})=n(f)$ for all $f\in\LI$ and $s\in G$. Moreover, one can show that discrete Kac algebras are inner amenable, thus Corollary \ref{corr} entails the equivalence of injectivity and amenability for discrete Kac algebras -- a concrete application of our techniques. The latter important result, due to Ruan \cite{Ru}, has also been derived in \cite{SV}. We hope to be able to use our approach to extend this equivalence to arbitrary discrete quantum groups, an outstanding open problem in the area.\end{remark}

\begin{thm} Let $\G$ be a locally compact quantum group. There exists a conditional expectation $E:\BLTQ\rightarrow\LIQH$ in $\CBTCll$ if and only if $\G$ is co-commutative, i.e., $\LIQ=\LG$ for some locally compact group $G$.\end{thm}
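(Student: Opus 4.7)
The plan is to prove both directions separately. The reverse direction is an immediate consequence of \thmref{th2} combined with the observation that $\LIQHP$ and $\LIQH$ coincide in the co-commutative case. The forward direction uses the $(\TCQ,\lhd)$-module compatibility of $E$ together with co-associativity of $\Gam^l$ to force $\LIQH$ into its own commutant.

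For the reverse direction: if $\G$ is co-commutative, the structure theorem cited in Section 2 gives $\G=\G_s$ for some locally compact group $G$, so $\LIQ=\LG$ and $\LIQH=\LI$ acting by multiplication on $\LTQ=\LT$. Since $\LI$ is a maximal abelian subalgebra of $\BLT$, one has $\LIQHP=\LIQH'=\LI=\LIQH$. As $\G_s$ is amenable, \thmref{th2} yields a conditional expectation $E:\BLTQ\rightarrow\LIQHP=\LIQH$ in $\CBTCll$, as desired.

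For the forward direction, suppose such an $E$ exists. For any $\h{y}\in\LIQH$ and $\om\in\TCQ$, the module property combined with $E(\h{y})=\h{y}$ gives $E(\om\lhd\h{y})=\om\lhd\h{y}$. Since $\om\lhd\h{y}\in\LIQ$ (Section 3) while $E$ takes values in $\LIQH$, this forces $\om\lhd\h{y}\in\LIQ\cap\LIQH=\C1$. Unwinding $\om\lhd\h{y}=(\id\ten\om)W^*(1\ten\h{y})W$ and varying $\om$, a standard slice-map argument yields $\Gam^l(\h{y})=W^*(1\ten\h{y})W\in\C1\oten\BLTQ$ for every $\h{y}\in\LIQH$, so we may write $\Gam^l(\h{y})=1\ten z_{\h{y}}$ for a unique $z_{\h{y}}\in\BLTQ$.

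The heart of the argument is to identify $z_{\h{y}}=\h{y}$, whence $\LIQH\subseteq\LIQH'$ and co-commutativity of $\G$ follows by duality. Applying co-associativity $(\Gam^l\ten\id)\Gam^l=(\id\ten\Gam^l)\Gam^l$ to $\h{y}$, together with $\Gam^l(1)=1\ten 1$, produces $\Gam^l(z_{\h{y}})=1\ten z_{\h{y}}$, i.e., $W$ commutes with $1\ten z_{\h{y}}$. Since the left slices $(f\ten\id)(W)=\lm(f)$, for $f\in\LOQ$, generate $\LIQH$, this means $z_{\h{y}}\in\LIQH'=\LIQHP$. Now $W\in\LIQ\oten\LIQH$ and $1\ten z_{\h{y}}$ commute, so $W(1\ten z_{\h{y}})W^*=1\ten z_{\h{y}}$; but this same expression equals $1\ten\h{y}$ by construction, forcing $\h{y}=z_{\h{y}}\in\LIQH'$. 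The main obstacle is precisely this pinning-down step: the initial slice-map argument only produces an abstract normal $*$-embedding $\h{y}\mapsto z_{\h{y}}$ of $\LIQH$ into $\BLTQ$, and co-associativity is what both locates its image inside $\LIQHP$ and collapses the embedding to the identity.
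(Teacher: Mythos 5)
Your proof is correct, and while the reverse direction coincides with the paper's (Renaud's theorem, \thmref{th2}, and the identification $\LIQHP=\LIQH$ for a commutative dual), the forward direction takes a genuinely different route. The paper derives the intertwining relation $(E\ten\id)\circ\Gam^l=\Gam^l\circ E$ and tests it on elements $\h{x}'\in\LIQHP$: since $W\in\LIQ\oten\LIQH$, one has $\Gam^l(\h{x}')=1\ten\h{x}'$ outright, so the relation forces $1\ten\h{x}'\in\LIQ\oten\LIQH$, giving $\LIQHP\subseteq\LIQH$; commutativity of $\LIQH$ is then obtained by conjugating with the modular involution $\h{J}$. You instead feed elements of $\LIQH$ itself into the module identity, use $\LIQ\cap\LIQH=\C1$ to conclude $\Gam^l(\h{y})=1\ten z_{\h{y}}$, and then invoke co-associativity of $\Gam^l$ (ultimately the pentagon relation) to place $z_{\h{y}}$ in $\LIQHP$ and collapse it to $\h{y}$, obtaining the reverse inclusion $\LIQH\subseteq\LIQHP$ directly. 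Your version buys two things: it avoids the modular-conjugation step entirely, and it sidesteps the slightly delicate formation of $E\ten\id$ on the von Neumann tensor product for a conditional expectation that need not be normal, since you only ever slice by normal functionals; the price is the extra co-associativity computation $(\Gam^l\ten\id)(1\ten z_{\h{y}})=1\ten 1\ten z_{\h{y}}=1\ten\Gam^l(z_{\h{y}})$. Both arguments ultimately rest on the same two facts, namely $W\in\LIQ\oten\LIQH$ and $\LIQ\cap\LIQH=\C1$, and both conclude via the Takesaki--Vaes--Vainerman structure theorem for commutative quantum groups.
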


\begin{proof} If $\G$ is co-commutative, then $\LIQ=\LG$ for some locally compact group $G$, and by \cite[Theorem 4]{Re} there exists a left invariant mean $m$ on $\LG$. In this case, its restriction to $\mathrm{UCB}(\h{G})=\RUC$ is also a left invariant mean, and Theorem \ref{th2} provides a conditional expectation $\Theta^l(m):\BLTQ\rightarrow\LIQHP$ in $\CBTCll$. By duality, $\LIQH=\LI=\LIQHP$, making $\Theta^l(m)$ the desired projection.

If $E:\BLTQ\rightarrow\LIQH$ exists in $\CBTCll$, then a simple calculation implies that $(E\ten\id)\circ\Gam^l=\Gam^l\circ E$. As $\Gam^l(\cdot)=W^*(1\ten(\cdot))W$, with $W\in\LIQ\oten\LIQH$, and $E(\BLTQ)=\LIQH$, we must have $(E\ten\id)\circ\Gam^l(x)=\Gam^l\circ E(x)\in\LIQ\oten\LIQH$ for every $x\in\BLTQ$. In particular, for $\h{x}'\in\LIQHP$, we have
\begin{equation*}(E\ten\id)\circ\Gam^l(\h{x}')=(E\ten\id)(W^*(1\ten\h{x}')W)=1\ten\h{x}'\in\LIQ\oten\LIQH,\end{equation*}
implying that $\LIQHP\subseteq\LIQH$. As $\LIQH$ is in standard form on $\BLTQ$, there exists a conjugate linear isometric involution $\h{J}$ on $\LTQ$ satisfying $\h{J}\LIQH\h{J}=\LIQHP$. We therefore obtain $\LIQH\subseteq\LIQHP$, that is, $\LIQH$ is commutative. By \cite[Theorem 2; \S2]{T,VV}, $\LIQH=\LI$ for some locally compact group $G$, making $\LIQ$ co-commutative.\end{proof}

\begin{remark}\label{remar} By the proof of \cite[Theorem 2.1]{Ru}, it follows that a locally compact quantum group $\G$ is amenable if and only if there exists a \e{non-zero} left (respectively, right, two-sided) invariant functional $m\in\LIQ^*$. Hence, the existence of a \e{completely bounded} covariant projection $E:\BLTQ\rightarrow\LIQH$ is equivalent to the amenability of $\G$, and it follows that we may replace ``conditional expectation'' by ``completely bounded projection'' in the statement of every theorem and corollary in this section. For Theorem 4.10, recall that a von Neumann algebra $M\subseteq\BH$ is injective if and only if there exists a completely bounded projection $E:\BH\rightarrow M$ \cite{CS,Pi}.\end{remark}

\section{Injective Modules}

Continuing in the spirit of the previous sections, here we establish a perfect duality between quantum group amenability and injectivity in the category of $\TCQ$-modules. We also show that both amenability of $\G$ and of $\h{\G}$ may be characterized through the injectivity of $\BLTQ$ as a left, respectively, right $\TCQ$-module. This marks the starting point for subsequent work on homological properties of $\TCQ$-modules and their connections to amenability.

Let $\mc{A}$ be a completely contractive Banach algebra and $X$ be an operator space. We say that $X$ is a right \e{operator $\mc{A}$-module} if it is a right Banach $\mc{A}$-module for which the module map $m:X\hten\mc{A}\rightarrow X$ is completely contractive. We denote by $\mathbf{mod}-\mc{A}$ the category of right operator $\mc{A}$-modules with morphisms given by completely contractive module homomorphisms. If $X,Y\in\mathbf{mod}-\mc{A}$, an injective morphism $\Phi:X\rightarrow Y$ is called \e{admissible} if there exists a completely contractive map (not necessarily a morphism) $\Psi:Y\rightarrow X$ such that $\Psi\circ\Phi=\id_{X}$. An operator module $X\in\mathbf{mod}-\mc{A}$ is \e{faithful} if for every non-zero $x\in X$, there is a non-zero $a\in\mc{A}$ such that $x\cdot a\neq 0$, and $X$ is said to be \e{injective} if for every $Y,Z\in\mathbf{mod}-\mc{A}$, every injective admissible morphism $\Phi:Y\rightarrow Z$, and every morphism $\Psi:Y\rightarrow X$, there exists a morphism $\tilde{\Psi}:Z\rightarrow X$ such that $\tilde{\Psi}\circ\Phi=\Psi$. Left operator $\mc{A}$-modules are defined similarly, and there are analogous notions of admissibility, faithfulness and injectivity in this category, denoted by $\mc{A}-\mathbf{mod}$.

Let $X\in\mathbf{mod}-\mc{A}$. The unitization of $\mc{A}$, denoted $\mc{A}^+$, carries a natural operator space structure turning it into a completely contractive Banach algebra (cf. \cite[\S3.2]{W}), and it follows that $X$ becomes a right operator $\mc{A}^+$-module via the extended action
\begin{equation*}x\cdot(a+\lm e)=x\cdot a+\lm x\hs(a\in\mc{A}^+, \lm\in\C, x\in X).\end{equation*}
Then there is a canonical morphism $\Delta^+:X\rightarrow\mc{CB}(\mc{A}^+,X)$ given by
\begin{equation*}\Delta^+(x)(a)=x\cdot a\hs(x\in X, a\in\mc{A}^+),\end{equation*}
where the $\mc{A}$-bimodule structure on $\mc{CB}(\mc{A}^+,X)$ is defined by
\begin{equation*}(a\cdot\Psi)(b)=\Psi(ba)\hs\mathrm{and}\hs(\Psi\cdot a)(b)=\Psi(ab)\hs(a\in\mc{A}, \Psi\in\mc{CB}(\mc{A}^+,X), b\in\mc{A}^+).\end{equation*}
By the standard argument, it follows that $X$ is injective if and only if there exists a morphism $\Phi:\mc{CB}(\mc{A}^+,X)\rightarrow X$ that is a left inverse to $\Delta^+$. Moreover, if $X$ is faithful, by the operator space version of \cite[Proposition 1.7]{DP} (which can be proved using the operator space structure of $\mc{A}^+$, cf. \cite[Proposition 3.2.7]{W}), $X$ is injective if and only if there exists a morphism $\Phi:\mc{CB}(\mc{A},X)\rightarrow X$ that is a left inverse to $\Delta:X\rightarrow\mc{CB}(\mc{A},X)$, where $\Delta(x)(a):=\Delta^+(x)(a)$ for all $x\in X$ and $a\in\mc{A}$.

\begin{thm}\label{dualinj} Let $\G$ be a locally compact quantum group. The following statements are equivalent:
\begin{enumerate}
\item $\G$ is amenable;
\item $\LIQH$ is injective in $\mathbf{mod}-(\TCQ,\rhd)$;
\item $\LIQH$ is injective in $(\TCQ,\rhd)-\mathbf{mod}$.\end{enumerate}\end{thm}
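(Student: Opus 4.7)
The plan is to use \thmref{th} and \thmref{th3} as the pivot connecting amenability to the existence of covariant conditional expectations, and to translate these into the injectivity conditions via the standard criterion recalled in the excerpt: a faithful operator $\mc{A}$-module $X$ is injective in $\mathbf{mod}-\mc{A}$ iff the canonical morphism $\Delta: X \to \mc{CB}(\mc{A}, X)$ admits a module left inverse, and symmetrically in the left-module category.

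For $(1) \Rightarrow (2), (3)$: By \thmref{th} and \thmref{th3}, amenability yields covariant conditional expectations $E: \BLTQ \to \LIQH$ lying in $\CBTCrr$ and $\CBTClr$, respectively. These exhibit $\LIQH$ as a module direct summand of $\BLTQ$ in $\mathbf{mod}-(\TCQ, \rhd)$ and in $(\TCQ, \rhd)-\mathbf{mod}$. Since direct summands of injective modules are injective, it suffices to establish that $\BLTQ$ itself is injective in these two categories; this is carried out by recognizing $\BLTQ = \TCQ^*$ as a $\TCQ$-dual module and explicitly constructing a module retract of $\Delta^+_{\BLTQ}$.

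For $(2) \Rightarrow (1)$: The right $(\TCQ, \rhd)$-module $\LIQH$ is faithful, since the identity $\hat{x} \rhd \om = \la \om, \hat{x} \ra \cdot 1$ detects any non-zero $\hat{x} \in \LIQH$. Hence injectivity supplies a morphism $\Phi: \mc{CB}((\TCQ, \rhd), \LIQH) \to \LIQH$ with $\Phi \circ \Delta = \id_{\LIQH}$. The decisive step is to introduce
\[
\iota: \BLTQ \to \mc{CB}((\TCQ, \rhd), \LIQH), \qquad \iota(x)(\om) := \la \om, x \ra \cdot 1.
\]
A direct computation using $\la \om, x \rhd \om_0 \ra = \la \om_0 \rhd \om, x \ra$ (a restatement of $\rhd = \Gam^r_*$) shows $\iota$ is a right $(\TCQ, \rhd)$-module morphism, and the formula $\hat{x} \rhd \om = \la \om, \hat{x} \ra \cdot 1$ yields $\iota|_{\LIQH} = \Delta$. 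Consequently, $\Phi \circ \iota: \BLTQ \to \LIQH$ is a completely contractive right $(\TCQ, \rhd)$-module map restricting to the identity on the von Neumann subalgebra $\LIQH$; by Tomiyama's theorem it is a covariant conditional expectation, and \thmref{th} concludes that $\G$ is amenable. The implication $(3) \Rightarrow (1)$ proceeds analogously in the left-module category, using \thmref{th3} and a suitable left-module intertwiner from $\BLTQ$ into $\mc{CB}((\TCQ, \rhd), \LIQH)$.

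The main technical obstacle is the forward direction, specifically proving that $\BLTQ$ is injective in the two $\TCQ$-module categories. Because $(\TCQ, \rhd)$ lacks a bounded approximate identity (right faithfulness fails unless $\G$ is trivial, as noted in Section 3), the classical Helemskii-style results on injectivity of dual Banach modules do not apply directly, and one must tailor the construction of the module retract of $\Delta^+_{\BLTQ}$ to this setting.
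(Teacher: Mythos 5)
Your backward direction $(2)\Rightarrow(1)$ is essentially the paper's argument: under the identification $\mc{CB}(\TCQ,\LIQH)\cong\BLTQ\oten\LIQH$, your intertwiner $\iota(x)(\om)=\la\om,x\ra 1$ is exactly the map $x\mapsto x\ten 1$, and composing with the left inverse $\Phi$ produces a covariant norm-one projection onto $\LIQH$, whence \thmref{th} applies. One caveat for $(3)\Rightarrow(1)$: the ``analogous'' intertwiner does \emph{not} restrict to $\Delta^l$ on $\LIQH$, since $\Delta^l(\h{x})(\rho)=\rho\rhd\h{x}=\la\rho,1\ra\h{x}$ while $\iota(\h{x})(\rho)=\la\rho,\h{x}\ra 1$. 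So $\Phi^l\circ\iota$ is only a unital completely contractive left $(\TCQ,\rhd)$-module map into $\LIQH$, not a conditional expectation; one must then extract a left invariant mean directly from the state $\h{f}\circ\Phi^l\circ\iota$ as in the proof of \thmref{th3}, which is what the paper does.

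The genuine gap is in your forward direction. You reduce $(1)\Rightarrow(2),(3)$ to the claim that $\BLTQ$ itself is injective in both $\mathbf{mod}-(\TCQ,\rhd)$ and $(\TCQ,\rhd)-\mathbf{mod}$, and you defer the construction of the required module retract while acknowledging it as ``the main technical obstacle.'' This reduction is not merely incomplete; for the right-module category it is the wrong target. The paper's Theorem \ref{amenable} shows that injectivity of $\BLTQ$ in $(\TCQ,\rhd)-\mathbf{mod}$ is \emph{equivalent} to amenability of $\G$ (so your reduction there is circular in difficulty --- proving it requires the pentagon-relation computation that constitutes a separate theorem), while Proposition \ref{dualamenable} ties injectivity of $\BLTQ$ in $\mathbf{mod}-(\TCQ,\rhd)$ to amenability of the \emph{dual} $\h{\G}$, and the closing remark of the paper explains that settling such implications would resolve the open amenability/co-amenability duality problem. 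So under the sole hypothesis that $\G$ is amenable, injectivity of $\BLTQ$ as a right $(\TCQ,\rhd)$-module is not available. The point you miss is that no injectivity of $\BLTQ$ is needed: since $\LIQH$ is faithful, it suffices to left-invert the single map $\Delta^r:\LIQH\to\mc{CB}(\TCQ,\LIQH)\cong\BLTQ\oten\LIQH$, $\Delta^r(\h{x})=\h{x}\ten 1$, and the paper does this directly by fixing a state $\h{f}\in\LOQH$ and setting $\Phi^r(T)=E((\id\ten\h{f})T)$, where $E$ is the covariant conditional expectation from \thmref{th}; the verification that $\Phi^r$ is a right $(\TCQ,\rhd)$-module morphism uses only the covariance of $E$ and the fact that the slice $\id\ten\h{f}$ commutes with the action $T\unrhd\rho=(\rho\ten\id\ten\id)(\Gam^r\ten\id)(T)$. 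The case $(1)\Rightarrow(3)$ is handled similarly with $\Phi^l(T)=(\h{f}\circ E\ten\id)(T)$ and the expectation from \thmref{th3}. You should replace the ``$\BLTQ$ is injective'' step with these explicit left inverses.
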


\begin{proof} $(1)\Rightarrow(2)$ Observe that if $\h{x}\in\LIQH$ such that $0=\h{x}\rhd\rho=\la\h{x},\rho\ra1$ for all $\rho\in\TCQ$, then $\la\h{x},\h{f}\ra=0$ for all $\h{f}\in\LOQH$, making $\h{x}=0$. Thus, $\LIQH$ is faithful in $\mathbf{mod}-(\TCQ,\rhd)$. It therefore suffices to provide a morphism which is a left inverse to the map $\Delta^r:\LIQH\rightarrow\mc{CB}(\TCQ,\LIQH)$ given by
\begin{equation*}\Delta^r(\h{x})(\rho)=\h{x}\rhd\rho\hs(\h{x}\in\LIQH, \rho\in\TCQ).\end{equation*}
Identifying $\mc{CB}(\TCQ,\LIQH)\cong\BLTQ\oten\LIQH$ (cf. \cite{ER}) via
\begin{equation*}\la\Psi,\rho\ten\h{f}\ra=\la\Psi(\rho),\h{f}\ra\hs(\Psi\in\mc{CB}(\TCQ,\LIQH), \rho\in\TCQ, \h{f}\in\LOQH),\end{equation*}
one easily sees that $\Delta^r(\h{x})=\h{x}\ten 1$ for all $\h{x}\in\LIQH$, and the corresponding $\TCQ$-module structure on $\BLTQ\oten\LIQH$ is given by $T\unrhd\rho=(\rho\ten\id\ten\id)(\Gam^r\ten\id)(T)$ for $T\in\BLTQ\oten\LIQH$ and $\rho\in\TCQ$.
Since $\G$ is amenable, there exists a conditional expectation $E:\BLTQ\rightarrow\LIQH$ that is a morphism in $\mathbf{mod}-(\TCQ,\rhd)$ (cf. Theorem \ref{th}). Fix a state $\h{f}\in\LOQH$, and define $\Phi^r:\BLTQ\oten\LIQH\rightarrow\LIQH$ by
\begin{equation*}\Phi^r(T)=E((\id\ten\h{f})T)\hs(T\in\BLTQ\oten\LIQH).\end{equation*}
Then $\Phi^r$ is a complete contraction, and for $\h{x}\in\LIQH$ we have
\begin{equation*}\Phi^r(\Delta^r(\h{x}))=\Phi^r(\h{x}\ten 1)=E(\h{x})=\h{x},\end{equation*}
so that $\Phi^r$ is a left inverse to $\Delta^r$. Moreover, for $T\in\BLTQ\oten\LIQH$ and $\rho\in\TCQ$, we have
\begin{align*}\Phi^r(T\unrhd\rho)&=\Phi^r((\rho\ten\id\ten\id)(\Gam^r\ten\id)(T))=E((\rho\ten\id)\Gam^r((\id\ten\h{f})T))=E(((\id\ten\h{f})T)\rhd\rho)\\
                             &=E((\id\ten\h{f})T)\rhd\rho=\Phi^r(T)\rhd\rho.\end{align*}
$(2)\Rightarrow(1)$ If $\LIQH$ is injective in $\mathbf{mod}-(\TCQ,\rhd)$, there is a morphism $\Phi^r:\BLTQ\oten\LIQH\rightarrow\LIQH$ that is a left inverse to $\Delta^r$. Define $E:\BLTQ\rightarrow\LIQH$ by $E(x)=\Phi^r(x\ten 1)$ for all $x\in\BLTQ$. Then $E$ is a morphism, and for $\h{x}\in\LIQH$ we get
\begin{equation*}E(\h{x})=\Phi^r(\h{x}\ten1)=\Phi^r(\Delta^r(\h{x}))=\h{x},\end{equation*}
making $E$ a projection of norm one onto $\LIQH$. Theorem \ref{th} then entails the amenability of $\G$.

$(1)\Rightarrow(3)$ As above, it follows that $\LIQH$ is faithful in $(\TCQ,\rhd)-\mathbf{mod}$. We therefore have to provide a morphism which is a left inverse to $\Delta^l:\LIQH\rightarrow\mc{CB}(\TCQ,\LIQH)$ given by
\begin{equation*}\Delta^l(\h{x})(\rho)=\rho\rhd\h{x}\hs(\h{x}\in\LIQH, \rho\in\TCQ).\end{equation*}
With the identification $\mc{CB}(\TCQ,\LIQH)\cong\BLTQ\oten\LIQH$, it follows that $\Delta^l(\h{x})=1\ten\h{x}$ for all $\h{x}\in\LIQH$ and that the corresponding $\TCQ$-module structure on $\BLTQ\oten\LIQH$ is given by $\rho\unrhd T=(\id\ten\rho\ten\id)(\Gam^r\ten\id)(T)$ for $T\in\BLTQ\oten\LIQH$ and $\rho\in\TCQ$.
By amenability of $\G$, there exists a conditional expectation $E:\BLTQ\rightarrow\LIQH$ that is a morphism in $(\TCQ,\rhd)-\mathbf{mod}$ (cf. Theorem \ref{th3}). Fix a state $\h{f}\in\LOQH$, put $m:=\h{f}\circ E\in\BLTQ^*$, and define $\Phi^l:\BLTQ\oten\LIQH\rightarrow\LIQH$ by
\begin{equation*}\Phi^l(T)=(m\ten\id)(T)\hs(T\in\BLTQ\oten\LIQH).\end{equation*}
Clearly $\Phi^l$ is a completely contractive left inverse to $\Delta^l$. Furthermore, for $T\in\BLTQ\oten\LIQH$, $\rho\in\TCQ$, and $\h{g}\in\LOQH$, we have
\begin{align*}\la\Phi^l(\rho\unrhd T),\h{g}\ra&=\la(m\ten\id)(\rho\unrhd T),\h{g}\ra=\la m,(\id\ten\rho)\Gam^r((\id\ten\h{g})T)\ra\\
                                            &=\la\h{f},E(\rho\rhd((\id\ten\h{g})T))\ra=\la\h{f},\rho\rhd E((\id\ten\h{g})T)\ra\\
                                            &=\la\rho,1\ra\la m,(\id\ten\h{g})(T)\ra=\la\rho,1\ra\la\Phi^l(T),\h{g}\ra=\la\rho\rhd\Phi^l(T),\h{g}\ra.\end{align*}
$(3)\Rightarrow(1)$ If $\LIQH$ is injective in $(\TCQ,\rhd)-\mathbf{mod}$, there is a morphism $\Phi^l:\BLTQ\oten\LIQH\rightarrow\LIQH$ that is a left inverse to $\Delta^l$. Define $E:\BLTQ\rightarrow\LIQH$ by $E(x)=\Phi^l(x\ten1)$ for all $x\in\BLTQ$. Since $E(1)=\Phi^l(1\ten1)=\Phi^l(\Delta^l(1))=1$, $E$ is a unital morphism, and for any state $\h{f}\in\LOQH$, we have $1=\h{f}\circ E(1)\leq\norm{\h{f}\circ E}\leq1$, making $\h{f}\circ E$ a state in $\BLTQ^*$. By the proof of Theorem \ref{th3}, it then follows that the restriction of $\h{f}\circ E$ to $\LIQ$ is a left invariant mean.\end{proof}

\begin{remark}\label{remar2} By the observations in Remark \ref{remar}, it follows that $\G$ is amenable if and only if $\LIQH$ is injective as a right (respectively, left) module in the category of right (respectively, left) operator $(\TCQ,\rhd)$-modules with \e{completely bounded} module homomorphisms. Note, however, that injectivity in this category is formally weaker than injectivity in $\mathbf{mod}-(\TCQ,\rhd)$, (respectively, $(\TCQ,\rhd)-\mathbf{mod}$).\end{remark}

By restricting to the commutative setting, we immediately obtain a new characterization of classical amenability, while, on the other hand, restricting to the co-commutative case, we see that $\LI$ is an injective $\TC$-module for any locally compact group $G$.

\begin{cor} Let $G$ be a locally compact group. The following statements are equivalent:
\begin{enumerate}
\item $G$ is amenable;
\item $\LG$ is injective in $\mathbf{mod}-(\TC,\rhd)$;
\item $\LG$ is injective in $(\TC,\rhd)-\mathbf{mod}$.\end{enumerate}\end{cor}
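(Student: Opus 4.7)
The plan is essentially to apply the preceding theorem to the commutative locally compact quantum group $\G_a=(\LI,\Gam_a,\vphi_a,\psi_a)$ associated with $G$, as introduced in Section~2. No additional argument beyond dictionary translation should be required; all the technical work is already contained in the proof of the preceding theorem.

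First I would recall from the discussion in Section~2 that every commutative locally compact quantum group is of the form $\G_a$, and that $\G_a$ is amenable if and only if the underlying group $G$ is amenable. Next, I would identify $L_2(\G_a)=\LT$, so that $\mc{T}(L_2(\G_a))=\TC$, and note that the dual quantum group of $\G_a$ is the co-commutative quantum group $\G_s$, whose von Neumann algebra is $L_\infty(\h{\G}_a)=\LG$. Under these identifications, the $(\TCQ,\rhd)$-bimodule structure on $\BLTQ$ from Section~3 becomes precisely the $(\TC,\rhd)$-bimodule structure on $\BLT$ appearing in the corollary, and the categories $\mathbf{mod}-(\TCQ,\rhd)$ and $(\TCQ,\rhd)-\mathbf{mod}$ become $\mathbf{mod}-(\TC,\rhd)$ and $(\TC,\rhd)-\mathbf{mod}$ respectively.

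With these identifications in hand, conditions (1), (2), (3) of the corollary are exactly conditions (1), (2), (3) of the preceding theorem applied to $\G=\G_a$. The equivalences therefore follow at once.

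There is no genuine obstacle here: the content is entirely in the preceding theorem, and the role of the corollary is simply to make explicit the classical case that motivates the general quantum group result. The only thing to be careful about is that the identifications $\LIQH=\LG$ and $\TCQ=\TC$ really do intertwine the module actions, which is immediate from the fact that $W\in\LIQ\oten\LIQH$ and $V\in\LIQHP\oten\LIQ$ reduce in the commutative case to the classical fundamental unitaries implementing convolution on $\LI$ and $\LG$.
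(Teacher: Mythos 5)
Your proposal is correct and matches the paper exactly: the corollary is obtained by specializing the preceding theorem to the commutative quantum group $\G_a$, using that $L_\infty(\h{\G}_a)=\LG$, $\mc{T}(L_2(\G_a))=\TC$, and that $\G_a$ is amenable precisely when $G$ is. The paper offers no further argument, so your dictionary translation is all that is required.
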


\begin{cor} Let $G$ be a locally compact group. Then $\LI$ is injective in both $\mathbf{mod}-(\TC,\rhd)$ and $(\TC,\rhd)-\mathbf{mod}$.\end{cor}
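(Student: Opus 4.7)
The plan is to invoke the preceding theorem with $\G$ chosen to be the co-commutative quantum group $\G_s = (\LG,\Gam_s,\vphi_s,\psi_s)$ associated with $G$. Under this specialization, the dictionary recalled in the preliminaries identifies $\LIQ$ with $\LG$ and $\LIQH$ with $\LI$, while $\TCQ$ becomes $\TC$ endowed with the $\rhd$-multiplication induced by the right fundamental unitary of $\G_s$. In particular, the module categories $\mathbf{mod}-(\TCQ,\rhd)$ and $(\TCQ,\rhd)-\mathbf{mod}$ for $\G = \G_s$ are exactly those appearing in the statement.

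The key input, also recorded in the preliminaries, is Reiter's theorem \cite{Re}: every co-commutative locally compact quantum group is amenable, so $\G_s$ is amenable for every locally compact group $G$, regardless of whether $G$ itself is amenable. With this observation in hand, the preceding theorem applied to $\G = \G_s$ delivers at once the injectivity of $\LIQH = \LI$ in both $\mathbf{mod}-(\TC,\rhd)$ and $(\TC,\rhd)-\mathbf{mod}$.

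There is no substantive obstacle to overcome; the corollary is a direct specialization of the main theorem to the dual co-commutative setting, where the amenability hypothesis is automatic. The only subtlety worth flagging is that the $\rhd$-product on $\TC$ here is the one induced by the right fundamental unitary of $\G_s$, which is different from the $\rhd$-product on $\TC$ coming from $\G_a$ that featured in the preceding corollary; this is precisely what makes the two corollaries independent, rather than one implying the other.
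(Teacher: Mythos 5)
Your proof is correct and is precisely the paper's own argument: the corollary is the specialization of the preceding theorem to $\G=\G_s$, whose amenability is automatic by \cite[Theorem 4]{Re}, and your remark about the $\rhd$-product on $\TC$ coming from the right fundamental unitary of $\G_s$ rather than $\G_a$ is an accurate and worthwhile clarification. One tiny slip: the cited result is Renaud's theorem on invariant means on $\LG$, not Reiter's theorem.
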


Recall that the multiplication in $(\TCQ,\rhd)$ is a complete quotient map for any locally compact quantum group $\G$. Consequently, $\TCQ=\la\TCQ\rhd\TCQ\ra$, and so if $x\in\BLTQ$ satisfies $\rho\rhd x=0$ for all $\rho\in\TCQ$, then $\la\rho\rhd x,\om\ra=\la x,\om\rhd\rho\ra=0$ for all $\rho,\om\in\TCQ$, making $x=0$. Thus, $\BLTQ$ is faithful in $(\TCQ,\rhd)-\mathbf{mod}$. By a similar argument it follows that $\BLTQ$ is also faithful in $\mathbf{mod}-(\TCQ,\rhd)$.

\begin{thm}\label{amenable} Let $\G$ be a locally compact quantum group. Then $\G$ is amenable if and only if $\BLTQ$ is injective in $(\TCQ,\rhd)-\mathbf{mod}$.\end{thm}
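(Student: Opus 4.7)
The plan is to split the proof into the forward and backward directions, using that $\BLTQ$'s faithfulness in $(\TCQ,\rhd)-\mathbf{mod}$ (established in the paragraph preceding the statement) reduces injectivity to the existence of a $(\TCQ,\rhd)$-module morphism $\Phi : \mc{CB}(\TCQ, \BLTQ) \to \BLTQ$ with $\Phi \circ \Delta^l = \id_{\BLTQ}$. Under the identification $\mc{CB}(\TCQ, \BLTQ) \cong \BLTQ \oten \BLTQ$ via $\Psi(\rho) = (\rho \otimes \id)T$ used in Theorem \ref{th}, $\Delta^l$ corresponds to $x \mapsto \sigma\Gam^r(x)$ (with $\sigma$ the flip) and the module action is $\rho \cdot T = (L_\rho \otimes \id)T$, where $L_\rho(\cdot) := (\id \otimes \pi(\rho))\Gam^r(\cdot)$.

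For the forward direction, I would argue via a retract. Fixing a state $\h{f} \in \LOQH$, define $j : \BLTQ \to \BLTQ \oten \LIQH$ by $j(x) = x \otimes 1$, and $p : \BLTQ \oten \LIQH \to \BLTQ$ by $p(T) = (\id \otimes \h{f})T$. A direct computation shows both $j$ and $p$ are $(\TCQ,\rhd)$-module morphisms---the key point is that the action $L_\rho \otimes \id$ acts only on the first factor and commutes with slicing the second by $\h{f}$---and $p \circ j = \id_{\BLTQ}$. So $\BLTQ$ is a module retract of $\BLTQ \oten \LIQH \cong \mc{CB}(\TCQ, \LIQH)$. Under amenability, the previous theorem gives the injectivity of $\LIQH$ in $(\TCQ,\rhd)-\mathbf{mod}$; a standard cofree-type transfer then yields injectivity of $\mc{CB}(\TCQ, \LIQH)$, and since retracts of injective modules are injective, $\BLTQ$ is injective.

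For the backward direction, assume $\BLTQ$ is injective and fix a morphism $\Phi$ that is a left inverse to $\Delta^l$. Mirroring the proof of Theorem \ref{th} $(3) \Rightarrow (1)$, define $E : \BLTQ \to \BLTQ$ by $E(x) := \Phi(x \otimes 1)$. The module property of $\Phi$, combined with $\rho \cdot (x \otimes 1) = L_\rho(x) \otimes 1$, yields $E(\rho \rhd x) = \rho \rhd E(x)$, so $E$ is a left $(\TCQ,\rhd)$-module map. Further, $E(1) = \Phi(1 \otimes 1) = \Phi(\Delta^l(1)) = 1$, so $E$ is unital and $\h{f} \circ E$ is a state on $\BLTQ$ for every state $\h{f} \in \LOQH$. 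The crucial remaining step is to show $E(\BLTQ) \subseteq \LIQH$: once achieved, $E$ is a covariant conditional expectation in $\CBTClr$, and Theorem \ref{th3} delivers amenability of $\G$.

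The main technical obstacle, I expect, is this last containment. Unlike the analogous step in Theorem \ref{th} $(3) \Rightarrow (1)$, where the codomain of $\Phi$ is already $\LIQH$, here $E(x) \in \LIQH$ must be deduced from the structural properties of $\Phi$ and $\Delta^l$. A natural route is to exploit the left-inverse identity $\Phi(\sigma\Gam^r(x)) = x$ together with the fact that $\Gam^r(\BLTQ) \subseteq \BLTQ \oten \LIQ$, analyzing the interaction of $\Phi$ with the coaction and, if necessary, iterating $E$ or combining it with the analogous map $x \mapsto \Phi(1 \otimes x)$ (which does land in $\LIQH$ by an analogous module-property argument, but is not obviously covariant). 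A secondary obstacle in the forward direction is justifying the cofree transfer of injectivity from $\LIQH$ to $\mc{CB}(\TCQ, \LIQH)$ in the operator-module framework.
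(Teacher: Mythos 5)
Your reduction to splitting $\Delta$ via faithfulness is correct, and your two ``obstacles'' are not side issues --- they are precisely where the proof lives, and neither is resolved. In the forward direction, the claim that injectivity of $\LIQH$ in $(\TCQ,\rhd)-\mathbf{mod}$ transfers to $\mc{CB}(\TCQ,\LIQH)\cong\BLTQ\oten\LIQH$ is not a standard cofree fact: the cofree injectives are the modules $\mc{CB}(\mc{A}^+,E)$ (injective for \emph{any} operator space $E$), whereas $\mc{CB}(\mc{A},X)$ is not in general a module retract of $\mc{CB}(\mc{A}^+,X)$ (zero-extension at the adjoined identity is not a morphism), nor does module-injectivity of $X$ obviously pass to $\mc{CB}(\mc{A},X)$. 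The paper sidesteps this entirely: it takes a \emph{two-sided} invariant mean $m$, sets $m_n=n\circ\Theta^r(m)$, and explicitly defines $\Phi(T)=(\id\ten m_n)(V^*TV)$ on $\BLTQ\oten\BLTQ$, verifying the module property by hand using the pentagon relation for $V$ and the identity $m\sq f=f\sq m=\la f,1\ra m$. Two-sidedness of the mean is essential there and is invisible in your retract picture.

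The backward direction has the more serious gap. With your identification $\Psi(\rho)=(\rho\ten\id)T$, the map $E(x)=\Phi(x\ten 1)$ is indeed covariant and unital, but there is no apparent reason it maps into $\LIQH$; with the opposite identification (the one the paper uses), $E(x)=\Phi(x\ten 1)$ \emph{does} land in $\LIQH$ --- because $\rho\unrhd(x\ten 1)=\la\rho,1\ra(x\ten 1)$ forces $\Gam^r(E(x))=E(x)\ten 1$ --- but then covariance is lost, exactly as you observe for $x\mapsto\Phi(1\ten x)$. You cannot extract both properties from the module identity alone, and a bare conditional expectation onto $\LIQH$ only yields injectivity of $\LIQH$, which is strictly weaker than amenability of $\G$. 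The paper's resolution is a different mechanism altogether: since $\Gam^r\circ\Phi$ is a conditional expectation onto $\Gam^r(\BLTQ)$, Tomiyama's theorem gives $\Phi(\Gam^r(x)T\Gam^r(y))=x\Phi(T)y$; taking $T=x'\ten 1$ with $x'\in\LIQ'$ shows $E(\LIQ')\subseteq\LIQ'$, and conjugating by the modular conjugation $J$ produces a conditional expectation $E_J:\BLTQ\rightarrow\LIQH$ with $E_J(\LIQ)\subseteq\LIQ\cap\LIQH=\C 1$, whence amenability follows from the So\l tan--Viselter criterion rather than from Theorem \ref{th3}. Without a substitute for this Tomiyama/modular-conjugation step, your backward direction does not close.
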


\begin{proof} Suppose $\G$ is amenable, and let $m\in\LIQ^*$ be a two-sided invariant mean. Since $\BLTQ$ is faithful, it suffices to provide a morphism that is a left inverse for the map $\Delta:\BLTQ\rightarrow\mc{CB}(\TCQ,\BLTQ)$ given by
\begin{equation}\label{Delta}\Delta(x)(\rho)=\rho\rhd x\hs(x\in\BLTQ, \rho\in\TCQ).\end{equation}
Identifying $\mc{CB}(\TCQ,\BLTQ)\cong\BLTQ\oten\BLTQ$ (cf. \cite{ER}) via
\begin{equation*}\la\Phi,\rho\ten\om\ra=\la\Phi(\om),\rho\ra\hs(\Phi\in\mc{CB}(\TCQ,\BLTQ), \rho,\om\in\TCQ),\end{equation*}
one easily sees that $\Delta=\Gam^r$, and that the left $(\TCQ,\rhd)$-module structure on $\BLTQ\oten\BLTQ$ is given by $\rho\unrhd T=(\id\ten\id\ten\rho)(\id\ten\Gam^r)(T)$, for $T\in\BLTQ\oten\BLTQ$ and $\rho\in\TCQ$. We are therefore reduced to finding a morphism $\Phi:\BLTQ\oten\BLTQ\rightarrow\BLTQ$ satisfying $\Phi\circ\Gam^r=\id_{\BLTQ}$.

Let $n\in\TCQ$ be a state. Then $m_n:=n\circ\Theta^r(m)$ is a state on $\BLTQ$, and we define $\Phi:\BLTQ\oten\BLTQ\rightarrow\BLTQ$ by
\begin{equation}\label{normal}\Phi(T)=(\id\ten m_n)(V^*TV)\hs(T\in\BLTQ\oten\BLTQ).\end{equation}
Clearly, $\Phi$ is a complete contraction, and for $x\in\BLTQ$, we have
\begin{equation*}\Phi(\Gam^r(x))=\Phi(V(x\ten1)V^*)=(\id\ten m_n(x\ten1))=x,\end{equation*}
so $\Phi$ is a left inverse for $\Gam^r$. To show the module property, fix $T\in\BLTQ\oten\BLTQ$ and $\rho\in\TCQ$. Then, using the standard leg notation, we obtain
\begin{align*}\Phi(\rho\rhd T)&=\Phi((\id\ten\id\ten\rho)(V_{23}T_{12}V_{23}^*))=(\id\ten m_n\ten\rho)(V_{12}^*V_{23}T_{12}V_{23}^*V_{12})\\
                              &=(\id\ten m_n\ten\rho)(V_{13}V_{23}V_{12}^*T_{12}V_{12}V_{23}^*V_{13}^*)\\
                              &=(\id\ten\rho)(V(\id\ten m_n\ten\id)(V_{23}V_{12}^*T_{12}V_{12}V_{23}^*)V^*).\end{align*}
Now, for any $\tau,\om\in\TCQ$, recalling that $\pi:\TCQ\rightarrow\LOQ$ denotes the canonical quotient map, we have

\begin{align*}\la(\id\ten m_n\ten\id)(V_{23}V_{12}^*T_{12}V_{12}V_{23}^*),\tau\ten\om\ra&=\la(m_n\ten\id)V((\tau\ten\id)(V^*TV)\ten1)V^*,\om\ra\\
                                                                          &=\la m_n,\Theta^r(\pi(\om))((\tau\ten\id)V^*TV)\ra\\
                                                                          &=\la n,\la\om,1\ra\Theta^r(m)((\tau\ten\id)V^*TV)\ra\hs(\mathrm{by\hskip2pt equation\hskip2pt (\ref{twosided})})\\
                                                                          &=\la m_n\ten\om,(\tau\ten\id)(V^*TV)\ten 1\ra\\
                                                                          &=\la(\id\ten m_n\ten\id)(V^*TV\ten 1),\tau\ten\om\ra.\\\end{align*}
Since $\tau$ and $\om$ in $\TCQ$ were arbitrary, it follows that
\begin{align*}\Phi(\rho\rhd T)&=(\id\ten\rho)(V(\id\ten m_n\ten\id)(V_{23}V_{12}^*T_{12}V_{12}V_{23}^*)V^*)\\
                              &=(\id\ten\rho)(V(\id\ten m_n\ten\id)(V^*TV\ten 1)V^*)\\
                              &=(\id\ten\rho)(V(\Phi(T)\ten 1)V^*)=\rho\rhd\Phi(T).\end{align*}

Conversely, suppose that there exists a morphism $\Phi:\BLTQ\oten\BLTQ\rightarrow\BLTQ$ that is a left inverse to $\Gam^r$. Then $\Gam^r\circ\Phi$ is a conditional expectation onto the image of $\Gam^r$, and $\Gam^r\circ\Phi=(\Phi\ten\id)(\id\ten\Gam^r)$ as $\Phi$ is a module map. Define a map $E:\BLTQ\rightarrow\BLTQ$ by
\begin{equation*}E(x)=\Phi(x\ten 1)\hs(x\in\BLTQ).\end{equation*}
Then $E$ is a complete contraction, and for $x\in\BLTQ$ we have
\begin{equation*}\Gam^r(E(x))=\Gam^r(\Phi(x\ten1))=(\Phi\ten\id)(\id\ten\Gam^r)(x\ten 1)=(\Phi\ten\id)(x\ten 1\ten 1)=\Phi(x\ten 1)\ten 1=E(x)\ten 1,\end{equation*}
which by the standard argument shows that $E(x)\in\LIQH$. Moreover, $E(\h{x})=\Phi(\h{x}\ten 1)=\Phi(\Gam^r(\h{x}))=\h{x}$ for all $\h{x}\in\LIQH$ making $E$ a projection of norm one onto $\LIQH$.

Since $\Gam^r\circ\Phi$ is a conditional expectation onto $\Gam^r(\BLTQ)$, it follows from \cite{To} that
\begin{equation*}(\Gam^r\circ\Phi)(\Gam^r(x)T\Gam^r(y))=\Gam^r(x)(\Gam^r\circ\Phi(T))\Gam^r(y)=\Gam^r(x\Phi(T)y),\end{equation*}
which, by the injectivity of $\Gam^r$, implies $\Phi(\Gam^r(x)T\Gam^r(y))=x\Phi(T)y$, for all $x,y\in\BLTQ$ and $T\in\BLTQ\oten\BLTQ$. Taking $T=x'\ten 1\in\LIQ'\oten\LIQ'$ and $x\in\LIQ$, we therefore have $\Phi(x'\ten 1)x=x\Phi(x'\ten 1)$. Consequently, $E(x')=\Phi(x'\ten 1)\in\LIQ'$ for every $x'\in\LIQ'$. Since $\LIQ$ is standard in $\BLTQ$, there is a conjugate linear involution $J$ on $\LTQ$ satisfying $J\LIQ J=\LIQ'$. Moreover, $J\LIQH J\subseteq\LIQH$ \cite[Proposition 2.1]{KV2}, so that $E_J:\BLTQ\rightarrow\LIQH$ given by
\begin{equation*}E_J(x)=JE(JxJ)J\hs(x\in\BLTQ)\end{equation*}
also defines a conditional expectation onto $\LIQH$. Clearly, $E_J(\LIQ)\subseteq\LIQ\cap\LIQH=\C1$, so \cite[Theorem 3]{SV} entails the amenability of $\G$.\end{proof}

\begin{remark} Contrary to Theorem \ref{dualinj}, it is not immediately obvious if one can weaken the hypothesis of Theorem \ref{amenable} to injectivity in the category of left $(\TCQ,\rhd)$-modules with \e{completely bounded} morphisms (cf. Remark \ref{remar2}).\end{remark}

By considering the category of left operator $\TCQ$-modules with \e{normal} completely contractive morphisms, denoted $(\TCQ,\rhd)-\mathbf{nmod}$, we obtain the following characterization of compactness.

\begin{cor} Let $\G$ be a locally compact quantum group. Then $\G$ is compact if and only if $\BLTQ$ is injective in $(\TCQ,\rhd)-\mathbf{nmod}$.\end{cor}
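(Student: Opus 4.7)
The strategy is to adapt the proof of \thmref{amenable}, threading normality through every stage.

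For the forward direction, suppose $\G$ is compact, so $\G$ admits a two-sided invariant Haar state $h\in\LOQ$ (by \cite[Proposition 3.1]{BT} together with the equivalence of left/right/two-sided invariance of means). I would replay the construction of $\Phi$ from the forward half of \thmref{amenable} with $m$ replaced by $h$: fix a state $n\in\TCQ$, put $m_n:=n\circ\Theta^r(h)$, and define $\Phi(T):=(\id\ten m_n)(V^*TV)$ as in equation~(\ref{normal}). Since $h\in\LOQ$, the remark preceding the corollary on compactness ensures $\Theta^r(h)$ is normal, so $m_n$ is a normal functional on $\BLTQ$ and $\Phi$ is a normal complete contraction. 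The module-map identity and the left-inverse relation $\Phi\circ\Gam^r=\id_{\BLTQ}$ then carry over verbatim from \thmref{amenable}, since the two-sided invariance of $h$ supplies the critical equation~(\ref{twosided}). Thus $\BLTQ$ is injective in $(\TCQ,\rhd)-\mathbf{nmod}$.

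For the backward direction, suppose $\Phi:\BLTQ\oten\BLTQ\to\BLTQ$ is a normal morphism left inverse to $\Gam^r$. I would set $E(x):=\Phi(x\ten 1)$, which is normal since $\Phi$ is. Retracing the backward argument of \thmref{amenable} shows that $E$ is a conditional expectation onto $\LIQH$ and that $E_J(x):=JE(JxJ)J$ is a normal conditional expectation of $\BLTQ$ onto $\LIQH$ satisfying $E_J(\LIQ)\subseteq\C 1$. Hence $m:=E_J|_{\LIQ}$ is a normal state on $\LIQ$. Invoking \cite[Theorem 3]{SV} as in \thmref{amenable} then yields a left invariant mean on $\LIQ$, which the normality of $E_J$ places in $\LOQ$; \cite[Proposition 3.1]{BT} then gives that $\G$ is compact.

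The main obstacle lies in the backward direction: ensuring that the invariant mean produced at the final step genuinely inherits the normality of $\Phi$. If the mean extracted via \cite[Theorem 3]{SV} is not transparently normal, one would instead verify directly that $m=E_J|_{\LIQ}$ is itself left invariant. Normality of $E_J$ permits exchanging $E_J$ with slice maps, reducing the problem to showing $(m\ten\id)\Gam(x)=m(x)\cdot 1$ for $x\in\LIQ$; this should follow from the $\LIQH$-bimodule property of $E_J$ (Tomiyama's theorem) combined with the localisation $V\in\LIQHP\oten\LIQ$.
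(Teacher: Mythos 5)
Your forward direction coincides with the paper's: take the two-sided invariant Haar state $h\in\LOQ$, note that $\Theta^r(h)$ is normal by \cite[\S4]{JNR}, and rerun the construction (\ref{normal}) to obtain a normal morphism splitting $\Gam^r$. The gap is in the converse. The paper does \emph{not} pass through \cite[Theorem 3]{SV} here; it invokes \cite[Theorem 4.2]{KN}, which states that a \emph{normal} conditional expectation $\BLTQ\rightarrow\LIQH$ mapping $\LIQ$ into $\C1$ forces $\h{\G}$ to be discrete, whence $\G$ is compact. Your route via \cite[Theorem 3]{SV} only produces \emph{some} left invariant mean on $\LIQ$, i.e., amenability of $\G$, and --- as you yourself flag --- there is no reason that mean is normal: the normality of $E_J$ does not survive whatever averaging \cite{SV} performs to extract the mean. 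Since amenability is strictly weaker than compactness, this does not close the argument.

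Your proposed fallback --- verifying directly that $m:=E_J|_{\LIQ}$ is left invariant --- also does not go through as sketched. Writing $f\star x=(\id\ten f)(V(x\ten 1)V^*)$ and using normality to commute $E_J$ past the slice map, you are left having to compute $(E_J\ten\id)(V(x\ten 1)V^*)$. But $V\in\LIQHP\oten\LIQ$: the leg of $V$ on which $E_J$ acts lies in the \emph{commutant} $\LIQHP=\LIQH'$, so Tomiyama's $\LIQH$-bimodule property of $E_J$ gives you no means of pulling $V$ out of the expectation. (If this direct argument worked, the paper would not need to cite \cite{SV} in the converse of Theorem \ref{amenable} either.) The missing ingredient is precisely \cite[Theorem 4.2]{KN}, or an equivalent argument showing that a normal conditional expectation onto $\LIQH$ sending $\LIQ$ into the scalars forces $\h{\G}$ to be discrete.
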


\begin{proof} If $\G$ is compact then there is a two-sided invariant mean $m\in\LOQ$, and one may define a normal morphism as in equation (\ref{normal}) to produce a left inverse to $\Delta$, as defined in (\ref{Delta}). Conversely, one may repeat the second half of the proof of Theorem \ref{amenable} to obtain a normal conditional expectation from $\BLTQ$ onto $\LIQH$ mapping $\LIQ$ into $\C1$. Then \cite[Theorem 4.2]{KN} implies that $\h{\G}$ is discrete whence $\G$ is compact.\end{proof}

\begin{prop}\label{dualamenable} Let $\G$ be a locally compact quantum group. If $\h{\G}$ is amenable, then $\BLTQ$ is injective in $\mathbf{mod}-(\TCQ,\rhd)$.\end{prop}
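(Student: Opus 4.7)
The plan is to adapt the forward direction of Theorem~\ref{amenable}, interchanging the roles of $\G$ and $\hat{\G}$ (and of left and right). Since the multiplication in $(\TCQ,\rhd)$ is a complete quotient, $\TCQ=\la\TCQ\rhd\TCQ\ra$, and the argument recalled just before Theorem~\ref{amenable} shows that $\BLTQ$ is faithful in $\mathbf{mod}-(\TCQ,\rhd)$. By the operator space analog of \cite[Proposition~1.7]{DP}, injectivity reduces to producing a right $(\TCQ,\rhd)$-module morphism $\Phi:\BLTQ\oten\BLTQ\to\BLTQ$ left-inverting $\Delta^r$. Identifying $\mc{CB}(\TCQ,\BLTQ)\cong\BLTQ\oten\BLTQ$ so that $\Delta^r$ becomes $\Gam^r$, the induced right module action is $T\cdot\rho=(\id\ten\rho\ten\id)(\Gam^r\ten\id)T$.

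Applying Theorem~\ref{th} to $\hat{\G}$, amenability of $\hat{\G}$ provides a two-sided invariant mean $\hat{m}$ on $\mathrm{LUC}(\hat{\G})$ together with the associated covariant conditional expectation $\hat{E}:\BLTQ\to\LIQ$ (covariance here is for the right $\TCQ$-action on $\BLTQ$ induced by $\hat{V}$). Fix a state $\hat{f}\in\LIQ^*$, put $\mu:=\hat{f}\circ\hat{E}\in\BLTQ^*$ (itself a state), and define
\[\Phi(T):=(\id\ten\mu)(V^*TV), \qquad T\in\BLTQ\oten\BLTQ.\]
Then $\Phi$ is completely contractive, and since $V^*\Gam^r(x)V=V^*V(x\ten 1)V^*V=x\ten 1$, we have $\Phi(\Gam^r(x))=(\id\ten\mu)(x\ten 1)=x$, so $\Phi$ is a left inverse of $\Gam^r$.

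The crux is to verify $\Phi(T\cdot\rho)=\Phi(T)\rhd\rho$. A direct expansion of the action gives $V_{12}^*(T\cdot\rho)V_{12}=(\id\ten\rho\ten\id)(V_{12}^*V_{13}T_{12}V_{13}^*V_{12})$, whence $\Phi(T\cdot\rho)=(\id\ten\rho\ten\mu)(V_{12}^*V_{13}T_{12}V_{13}^*V_{12})$. Pairing both sides of the required equality against an arbitrary $\tau\in\TCQ$ and applying the pentagon relation $V_{12}V_{13}V_{23}=V_{23}V_{12}$ allows one to re-express the middle factors. Following the structural pattern of the proof of Theorem~\ref{amenable}, the two-sided invariance of $\hat{m}$ together with the $\hat{V}$-covariance of $\hat{E}$ then permit the extra $V$-conjugation introduced by the module action to be absorbed into $\mu$, completing the verification.

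The main obstacle is translating the $\hat{V}$-covariance of $\hat{E}$ (naturally formulated via $\hat{V}$-conjugation) into the identity involving $V$ required here; this rests on the standard commutation and pentagon relations linking $V$ and $\hat{V}$ on $\LTQ\ten\LTQ$ (cf. \cite{KV2}). Once these structural identities are in hand, the remainder of the argument is bookkeeping parallel to the forward direction of Theorem~\ref{amenable}.
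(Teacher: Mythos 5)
Your overall architecture matches the paper's: both reduce, via faithfulness of $\BLTQ$ and the identification $\mc{CB}(\TCQ,\BLTQ)\cong\BLTQ\oten\BLTQ$, to producing a right module morphism of the form $\Phi(T)=(\id\ten\mu)(V^*TV)$ left-inverting $\Gam^r$, with $\mu$ a state manufactured from an invariant mean on the dual; the left-inverse verification is fine. But the decisive step --- the module property $\Phi(T\unrhd\rho)=\Phi(T)\rhd\rho$ --- is exactly what you defer to ``standard commutation and pentagon relations linking $V$ and $\h{V}$,'' and as you have set things up it does not go through. After applying the pentagon relation for $V$, the map that $\mu$ must turn into the scalar $\la\om,1\ra$ is $y\mapsto(\om\ten\id)(V^*(1\ten y)V)$. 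This is the right $(\TCQ,\rhd)$-action attached to the \emph{commutant} dual $\h{\G}'$, whose right fundamental unitary is $\h{V}'=\sigma V^*\sigma$; it is not the action attached to $\h{\G}$, whose right fundamental unitary $\h{V}$ is related to $V$ only through the modular conjugations. Hence the $\h{V}$-covariance of your $\h{E}$ (equivalently, invariance of $\h{m}$ on $\mathrm{LUC}(\h{\G})$) is not the property the computation calls for, and the ``translation'' you flag as the main obstacle is the entire content of the proof, left unproven.

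The paper closes this gap by first passing from $\h{\G}$ to $\h{\G}'$: since $\h{J}J\ten\h{J}J$ intertwines $\h{V}$ and $\h{V}'$, amenability of $\h{\G}$ is equivalent to that of $\h{\G}'$, so one may take a two-sided invariant mean $\h{m}'$ on $\LIQHP$ and set $\mu=n\circ\Theta^r(\h{m}')$, where $\Theta^r$ is the representation of $\mathrm{LUC}(\h{\G}')$ and $n\in\TCQ$ is a state. With this choice the pentagon computation closes, because $(\om\ten\id)(V^*(1\ten y)V)=\Theta^r(\pi(\om))(y)$ for the $\h{\G}'$-action and the invariance $\h{m}'\sq\pi(\om)=\la\om,1\ra\,\h{m}'$ applies verbatim. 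To repair your argument you must either make this passage to $\h{\G}'$ explicit or supply the (nontrivial) intertwining step converting $\h{V}$-covariance into $\h{V}'$-covariance. A smaller point: with the identification $\la\Phi,\rho\ten\om\ra=\la\Phi(\rho),\om\ra$ that makes $\Delta=\Gam^r$, the induced right action is $T\unrhd\rho=(\rho\ten\id\ten\id)(\Gam^r\ten\id)(T)$, not $(\id\ten\rho\ten\id)(\Gam^r\ten\id)(T)$; your subsequent leg bookkeeping inherits this error.
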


\begin{proof} By \cite[Proposition 2.15]{KV2}, the unitary operator $U\ten U:=\h{J}J\ten\h{J}J$ on $\LTQ\ten\LTQ$ intertwines the right fundamental unitaries of $\h{\G}$ and $\h{\G}'$, denoted $\h{V}$ and $\h{V}'$, respectively. One then obtains a one-to-one correspondence between invariant means on $\LIQH$ and $\LIQHP$ via conjugation with $U$, making $\h{\G}$ amenable if and only if $\h{\G}'$ is. Thus, assuming amenability of $\h{\G}$, we let $\h{m}'$ be a two-sided invariant mean on $\LIQHP$. Similar to the previous theorem, we must provide a morphism which is a left inverse to the map $\Delta:\BLTQ\rightarrow\mc{CB}(\TCQ,\BLTQ)$ given by
\begin{equation}\label{delta}\Delta(x)(\rho)=x\rhd\rho\hs(x\in\BLTQ, \rho\in\TCQ).\end{equation}
In this case, we identify $\mc{CB}(\TCQ,\BLTQ)\cong\BLTQ\oten\BLTQ$ (cf. \cite{ER}) via
\begin{equation*}\la\Phi,\rho\ten\om\ra=\la\Phi(\rho),\om\ra\hs(\Phi\in\mc{CB}(\TCQ,\BLTQ), \rho,\om\in\TCQ).\end{equation*}
This ensures $\Delta=\Gam^r$, and that the corresponding $\TCQ$-module structure on $\BLTQ\oten\BLTQ$ is defined by $T\unrhd\rho=(\rho\ten\id\ten\id)(\Gam^r\ten\id)(T)$ for $T\in\BLTQ\oten\BLTQ$ and $\rho\in\TCQ$.

We take a normal state $n\in\TCQ$, and define $\Phi:\BLTQ\oten\BLTQ\rightarrow\BLTQ$ by
\begin{equation*}\Phi(T)=(\id\ten m_n)(V^*TV)\hs(T\in\BLTQ\oten\BLTQ),\end{equation*}
where $m_n:=n\circ\Theta^r(\h{m}')$ is a state on $\BLTQ$, and $\Theta^r$ denotes the representation of $\mathrm{LUC}(\h{\G}')$. Clearly, $\Phi$ is a completely contractive left inverse to $\Gam^r$. To show that $\Phi$ is also a module map we follow along similar lines as in Theorem \ref{amenable}. Fix $T\in\BLTQ\oten\BLTQ$ and $\rho\in\TCQ$. Then
\begin{align*}\Phi(T\unrhd\rho)&=\Phi((\rho\ten\id\ten\id)(V_{12}T_{13}V_{12}^*))=(\rho\ten\id\ten m_n)(V_{23}^*V_{12}T_{13}V_{12}^*V_{23})\\
                              &=(\rho\ten\id\ten m_n)(V_{12}V_{23}^*V_{13}^*T_{13}V_{13}V_{23}V_{12}^*)=(\rho\ten\id)(V(\id\ten\id\ten m_n)(V_{23}^*V_{13}^*T_{13}V_{13}V_{23})V^*).\end{align*}
Now, denoting $\pi$ by the canonical quotient map $\TCQ\rightarrow\LOQHP$, and using the fact that $\h{V}'=\sigma V^*\sigma$, where $\sigma$ is the flip map on $\LTQ\ten\LTQ$, for any $\tau,\om\in\TCQ$, we have
\begin{align*}\la(\id\ten\id\ten\h{m'}_n)(V_{23}^*V_{13}^*T_{13}V_{13}V_{23}),\tau\ten\om\ra&=\la(\id\ten\id\ten m_n)(V_{23}^*(\sigma\ten 1)V_{23}^*T_{23}V_{23}(\sigma\ten1)V_{23}),\tau\ten\om\ra\\
&=\la(\id\ten\id\ten m_n)(V_{13}^*V_{23}^*T_{23}V_{23}V_{13}),\om\ten\tau\ra\\
&=\la(\id\ten m_n)(V^*(1\ten(\tau\ten\id)(V^*TV))V),\om\ra\\
&=\la( m_n\ten\id)(\h{V}'((\tau\ten\id)(V^*TV)\ten1)\h{V}'^*),\om\ra\\
&=\la m_n,\Theta^r(\pi(\om))((\tau\ten\id)(V^*TV))\ra\\
&=\la n,\la\om,1\ra\Theta^r(\h{m}')((\tau\ten\id)(V^*TV))\ra\\
&=\la m_n\ten\om,(\tau\ten\id)(V^*TV)\ten 1\ra\\
&=\la(\id\ten m_n\ten\id)(V^*TV\ten 1),\tau\ten\om\ra.\end{align*}
As $\tau$ and $\om$ were arbitrary, we have
\begin{align*}\Phi(T\unrhd\rho)&=(\rho\ten\id)(V(\id\ten\id\ten m_n)(V_{23}^*V_{13}^*T_{13}V_{13}V_{23})V^*)\\
                             &=(\rho\ten\id)(V(\id\ten m_n\ten\id)(V^*TV\ten 1)V^*)\\
                             &=(\rho\ten\id)(V(\Phi(T)\ten1)V^*)\\
                             &=\Phi(T)\rhd\rho.\end{align*}\end{proof}

\begin{remark} We remark that the converse of Proposition~\ref{dualamenable} holds in the setting of Kac algebras. The proof involves machinery from representations of completely bounded multipliers over quantum groups \cite{JNR}, and will therefore appear in subsequent work.\end{remark}

\begin{remark} Let $\mc{A}$ be a completely contractive Banach algebra. We say that an element $X\in\mc{A}-\mathbf{mod}$ is \e{flat} if and only if $X^*$ is injective in $\mathbf{mod}-\mc{A}$. In this case, we obtain a stronger version of operator flatness than the usual definition involving completely bounded morphisms \cite{A} (see \cite{H} for the classical setting). If $\G$ is a locally compact quantum group such that $\h{\G}$ is amenable, then Proposition \ref{dualamenable} implies that $(\TCQ,\rhd)$ is flat in $(\TCQ,\rhd)-\mathbf{mod}$, a property solely in terms of the Banach algebra structure of $(\TCQ,\rhd)$, without involving the dual quantum group. If one can deduce the existence of a bounded right approximate identity in $(\TCQ,\rhd)$, which is true in the case of $\G_s$, then $\G$ is co-amenable by \cite[Proposition 5.4]{HNR1}. Since, on the other hand, co-amenability of $\G$ implies amenability of $\h{\G}$ by \cite[Theorem 3.2]{BT}, one would thus obtain a solution to the long-standing conjecture on the duality of amenability and co-amenability for arbitrary locally compact quantum groups. Moreover, the results in this paper suggest further approaches to this open problem.\end{remark}

\section*{Acknowledgements}

This work was completed as part of the doctoral thesis of the first author, who was supported by an NSERC Canada Graduate Scholarship and a FCRF Joint PhD Scholarship. The second author was partially supported by an NSERC Discovery Grant.

\vskip5pt

\vspace{0.2in}

\end{spacing}

\end{document}